\date{ }
\newcommand{\ga}{\Gamma}
\newtheorem{theorem}{Theorem}[section]
\newtheorem{lemma}[theorem]{Lemma}
\newtheorem{corollary}[theorem]{Corollary}
\newtheorem{definition}[theorem]{Definition}
\title{Classification of finite $C\theta\theta$-groups with even order and its application}
\author{{\bf Ali Mahmoudifar}\\
Department of Mathematics, Tehran-North Branch, Islamic Azad University, Tehran, Iran\\ e-mail: alimahmoudifar@gmail.com}
\begin{document}
\maketitle
\begin{abstract}
A finite group of order divisible by $3$ in which
centralizers of $3$-elements are $3$-subgroups will be called a $C\theta\theta$-group. The prime graph (or {\it Gruenberg-Kegel graph}) of a finite group $G$ is denoted by $\ga(G)$ (or ${\rm GK}(G)$) and its a familiar. Also the degrees sequence of $\ga(G)$ is called the degree pattern of $G$ and is denoted by $D(G)$. In this paper, first we classify the finite $C\theta\theta$-groups with even order. Then we show that there are infinitely many $C\theta\theta$-groups with the same degree pattern. Finally, we proved that the simple group ${\rm PSL}(2,q)$ and the almost simple group ${\rm PGL}(2,q)$, where $q>9$ is a power of $3$, are determined by their degree pattern.
%In particular, we conclude that if $G$ is a finite group such that $\ga(G)$ is isomorphic to $\ga(L)$ where $L\in\{{\rm PSL}(2,q), {\rm PGL}(2,q)\}$, then $G$ is isomorphic to $L$.
\end{abstract}
{\bf 2000 AMS Subject Classification}:  $20$D$05$, $20$D$60$,
20D08.
\\
{\bf Keywords :} linear group, prime graph, degree pattern, $C\theta\theta$-group.
%%%%%%%%%%%%%%%%%%%%%%%%%%%%%%%%%%%%%%%%%%%%%%%%%%%%%%%%%%%%%%%%%%%%%%%%%%%%%%%%%%%%%%%%%%%%%%%%%%%%%%%%%
%%%%%%%%%%%%%%%%%%%%%%%%%%%%%%%%%%%%%%%%%%%%%%%%%%%%%%%%%%%%%%%%%%%%%%%%%%%%%%%%%%%%%%%%%%%%%%%%%%%%%%%%%
\section{Introduction}
If $n$ is a natural number, then
we denote by $\pi(n)$, the set of all prime divisors of $n$.
Throughout this paper by $G$ we mean a finite group. The set $\pi(|G|)$ is denoted by
$\pi(G)$. Also the set of element orders of $G$ is denoted by
$\pi_e(G)$. We denote by $\mu(S)$, the maximal numbers of $\pi_e(G)$ under the divisibility
relation. The \textit{prime graph} (or {\it Gruenberg-Kegel graph}) of $G$ is a simple graph whose vertex
set is $\pi(G)$ and two distinct primes $p$ and $q$ are joined by an
edge (and we write $p\sim q$), whenever $G$ contains an element of
order $pq$. The prime graph of $G$ is denoted by $\ga(G)$.
If $S$ is a finite non-abelian simple group and $G$ is a finite group such that $S\leq G\leq {\rm Aut}(S)$, then we say that $G$ is an almost simple group related to the simple group $S$.The degree pattern of a finite group is defined as follows:
%%%%%%%%%%%%%%%%%%%%%%%%%%%%%%%%%%%%%%%%%%%%%%%%%%%%%%%%%%%%%%%%%%%%%%%%%%%%%%%%%%%%%%%%%%%%%%%%%%%%%%%%%%%%%%%
%\begin{definition}A finite group of order divisible by $3$ in which centralizers of $3$-elements are $3$-groups will be called a $C\theta\theta$-group. \end{definition}
%%%%%%%%%%%%%%%%%%%%%%%%%%%%%%%%%%%%%%%%%%%%%%%%%%%%%%%%%%%%%%%%%%%%%%%%%%%%%%%%%%%%%%%%%%%%%%%%%%%%%%%%%%%%%%%
%\begin{definition} A finite group with an $S_3$-subgroup $M$ containing the centralizer of each of its nonidentity elements will be called a $3CC$-group.\end{definition}
%%%%%%%%%%%%%%%%%%%%%%%%%%%%%%%%%%%%%%%%%%%%%%%%%%%%%%%%%%%%%%%%%%%%%%%%%%%%%%%%%%%%%%%%%%%%%%%%%%%%%%%%%%%%%%%
\begin{definition}{\rm(\cite{mogh})}
Let $G$ be a group and $\pi(G)=\{p_1,p_2,\ldots,p_k\}$ where $p_1<p_2<\cdots<p_k$.
Then the degree pattern of $G$ is defined as follows:
$$D(G):=(\deg(p_1),\deg(p_2),\ldots,\deg(p_k)),$$
where $\deg(p_i)$, $1\leq i\leq k$, is the degree of vertex $p_i$ in $\ga(G)$.
\end{definition}
Following Higman \cite{hig}, a finite group of order divisible by $3$ in which
centralizers of $3$-elements are $3$-groups will be called a $C\theta\theta$-group. A finite
group with a Sylow $3$-subgroup $M$ containing the centralizer of each of its nonidentity
elements will be called a $3CC$-group. So by the above notations, a finite group $G$ is a $C\theta\theta$-group whenever ${\rm deg}(3)=0$ in $\ga(G)$.

We say that a finite group $H$ is determined by its degree pattern of its prime graph if and only if for each finite group $G$ such that $D(G)=D(H)$, then $G$ is isomorphic to $H$. It is clear that the relation $D(G)=D(H)$ does not imply that $\pi(G)=\pi(H)$ (for example we have $D(S_3)=D(D_{10})=(0,0)$ but $\pi(S_3)=\{2,3\}$ and $\pi(D_{10})=\{2,5\}$).
In this paper, first we classify the finite $C\theta\theta$-groups with even order. Then we show that there are infinitely many $C\theta\theta$-groups with the same degree pattern. Also we proved that the simple group ${\rm PSL}(2,q)$ and the almost simple group ${\rm PGL}(2,q)$, where $q>9$ is a power of $3$, ${\rm PSL}(3,4)$ and the simple group ${\rm PSL}(2,p)$ where either $(p-1)/2$ or $(p+1)/2$ is a power of $3$, are determined by their degree pattern. Finally, we ask a question about this type determination of finite groups.
%%%%%%%%%%%%%%%%%%%%%%%%%%%%%%%%%%%%%%%%%%%%%%%%%%%%%%%%%%%%%%%%%%%%%%%%%%%%%%%%%%%%%%%%%%%%%%%%%%%%%%%%%%%%%%%
%%%%%%%%%%%%%%%%%%%%%%%%%%%%%%%%%%%%%%%%%%%%%%%%%%%%%%%%%%%%%%%%%%%%%%%%%%%%%%%%%%%%%%%%%%%%%%%%%%%%%%%%%%%%%%%
\section{Preliminary Results}
%%%%%%%%%%%%%%%%%%%%%%%%%%%%%%%%%%%%%%%%%%%%%%%%%%%%%%%%%%%%%%%%%%%%%%%%%%%%%%%%%%%%%%%%%%%%%%%%%%%%%%%%%%%%%%%
%%%%%%%%%%%%%%%%%%%%%%%%%%%%%%%%%%%%%%%%%%%%%%%%%%%%%%%%%%%%%%%%%%%%%%%%%%%%%%%%%%%%%%%%%%%%%%%%%%%%%%%%%%%%%%%
\begin{lemma}\label{lem-main}\cite[Theram A]{arad-even}
Let $G$ be a $3CC$-group with an $S_3$-subgroup $M$. Then one of
the following statements is true:

(1) $M \unlhd G$;

(2) $G$ has a normal nilpotent subgroup $N$ such that $G/N \cong N_G(M)$,
$M$ cyclic;

(3) $G$ has a normal elementary abelian $2$-subgroup $N$ such that
$G/N\cong {\rm PSL}(2,2^{\alpha})$, $\alpha\geq 2$;

(4) $G\cong {\rm PSL}(2, q)$ for some $q > 3$;

(5) $G \cong {\rm PSL}(3, 4)$;

(6) $G$ contains a simple normal subgroup ${\rm PSL}(2, 3^n)$, $n > 1$, of index $2$.
\end{lemma}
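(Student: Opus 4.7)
The plan is to exploit the very strong hypothesis that every nonidentity element of the Sylow $3$-subgroup $M$ has its full $G$-centralizer inside $M$; this forces $M$ to be a trivial intersection (TI) Sylow subgroup and reduces the problem to the classical theory of groups with a strongly $3$-embedded subgroup.

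First, I would verify the TI property: if $1\ne x\in M\cap M^{g}$ then $C_{G}(x)\le M$ (because $x\in M$) and $C_{G}(x)\le M^{g}$ (because $x\in M^{g}$), and in particular $Z(M)\le M^{g}$ and $Z(M^{g})\le M$. A short fusion argument, applied to the $p$-local subgroups $N_{M}(M\cap M^{g})$ and $N_{M^{g}}(M\cap M^{g})$, then forces $M=M^{g}$. Consequently $H:=N_{G}(M)$ is strongly $3$-embedded in $G$, i.e.\ $3\mid|H|$ while $3\nmid|H\cap H^{g}|$ for every $g\in G\setminus H$.

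Next I would split on whether $G$ has a nontrivial normal $3$-complement. If $M\unlhd G$ we are in case (1). If $G$ has a proper normal subgroup $N$ of order coprime to $3$, then $N_{G}(M)$ acts on $N$ and the $3CC$-hypothesis makes this action Frobenius; standard Frobenius-kernel results force $N$ nilpotent and $M$ cyclic, landing us in case (2). It is the remaining \textbf{nonsolvable} situation that requires the serious input.

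For that case, I would invoke the classification of finite simple groups possessing a strongly $3$-embedded subgroup (the odd-prime analogue of Bender's theorem, due in this form to Suzuki, Aschbacher, and others and ultimately resting on CFSG-level machinery). For $p=3$ the list of composition factors is short: essentially $\mathrm{PSL}(2,3^{n})$, $\mathrm{PSL}(2,2^{\alpha})$ with $3\mid 2^{\alpha}\pm 1$, and the small exceptional example $\mathrm{PSL}(3,4)$. A lifting argument, using the TI property and Gasch\"utz-type splitting for the maximal normal $3'$-subgroup $O_{3'}(G)$, recovers $G$ itself: the $\mathrm{PSL}(2,2^{\alpha})$ factor produces case (3) (with $O_{3'}(G)$ forced to be an elementary abelian $2$-group because centralizers of $3$-elements are $3$-groups), the $\mathrm{PSL}(2,q)$ and $\mathrm{PSL}(3,4)$ factors give cases (4) and (5), and the field automorphism of order $2$ on $\mathrm{PSL}(2,3^{n})$ yields case (6).

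The main obstacle is clearly Step 3: identifying the simple composition factor from the strongly $3$-embedded hypothesis is the heart of the argument and is where Arad's paper does its deepest work. Once that black box is in place, the organisation into the six listed cases is a fairly mechanical exercise in fusion and normal-subgroup bookkeeping using the $C\theta\theta$/$3CC$ conditions.
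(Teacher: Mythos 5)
This lemma is not proved in the paper at all: it is Theorem A of Arad's 1976 article (reference \texttt{arad-even}), quoted verbatim as a preliminary result, so there is no in-paper argument to compare yours against. Judged on its own terms, your sketch has the right opening move --- the $3CC$ hypothesis does make $M$ a TI Sylow subgroup (more simply than by fusion in $N_M(M\cap M^g)$: pick $1\neq z\in Z(M)$ with $z\in M^g$; then $M\le C_G(z)\le M^g$), and $N_G(M)$ is then strongly $3$-embedded. But from there your outline diverges from what is actually available and leaves real gaps.

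First, your case division is not a partition: a nonsolvable $G$ can perfectly well have a nontrivial normal $3'$-subgroup (conclusion (3) of the lemma is exactly that situation), so ``$G$ has a proper normal $3'$-subgroup $\Rightarrow$ case (2)'' is false as stated; the correct split is on the structure of $G/O_{3'}(G)$, not on the existence of $O_{3'}(G)$. Second, the steps you call mechanical are not: in case (2) you must produce the precise isomorphism $G/N\cong N_G(M)$ and the cyclicity of $M$ (which comes from $M$ being a Frobenius complement of odd order, not merely from ``standard Frobenius-kernel results'' about $N$), and in case (3) the assertion that $O_{3'}(G)$ is an elementary abelian $2$-group requires an analysis of the $G/N$-module structure of $N$ under a fixed-point-free action of a $3$-element, not just the $C\theta\theta$ condition. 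Third, and most importantly, your black box --- the classification of simple groups with a strongly $3$-embedded subgroup --- is CFSG-level machinery that postdates Arad's theorem; his 1976 proof instead rests on the earlier classification of $C\theta\theta$-groups (Higman, Fletcher, Stellmacher) and Feit--Thompson's CN-group work. Invoking the strongly $p$-embedded classification would make the argument circular in spirit (that classification is itself proved using results of this kind) unless you are explicit that you are giving a CFSG-dependent rederivation rather than reconstructing Arad's proof.
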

%%%%%%%%%%%%%%%%%%%%%%%%%%%%%%%%%%%%%%%%%%%%%%%%%%%%%%%%%%%%%%%%%%%%%%%%%%%%%%%%%%%%%%%%%%%%%%%%%%%%%%%%%%%%%%%
\begin{lemma}\label{fro}\cite{gor, pas}
Let $G$ be a Frobenius group of even order and let $H$ and $K$ be the
Frobenius complement and Frobenius kernel of $G$, respectively.
Then the following assertions hold:

(a) $K$ is a nilpotent group;

(b) $|K|\equiv 1 \pmod{|H|}$;

(c) Every subgroup of $H$ of order $pq$, with $p$, $q$ (not necessarily distinct) primes, is cyclic. In particular,
every Sylow Subgroup of $H$ of odd order is cyclic and a $2$-Sylow subgroup of $H$ is
either cyclic or a generalized quaternion group. If $H$ is a non-solvable group, then $H$ has a
subgroup of index at most $2$ isomorphic to $Z\times {\rm SL}(2, 5)$, where $Z$ has cyclic Sylow $p$-subgroups
and $\pi(Z) \cap \{2, 3, 5\} = \emptyset$. If $H$ is solvable and $O(H) = 1$, then
either $H$ is a $2$-group or $H$ has a subgroup of index at most $2$ isomorphic to $SL(2, 3)$.
\end{lemma}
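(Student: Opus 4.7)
The plan is to split the lemma into its three parts and apply three classical pillars of Frobenius theory: Frobenius's character-theoretic proof of the normality of the kernel, Thompson's theorem on fixed-point-free automorphisms, and Zassenhaus's classification of Frobenius complements. Since the statement is attributed to Gorenstein and Passman, the sketch below follows standard textbook lines rather than proposing new arguments.

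First I would dispatch (a) and (b) together. Frobenius's original theorem shows that $K$ is a normal complement to $H$, so $G = K \rtimes H$ and $H$ acts by conjugation on $K$ fixing only the identity. Any prime-order $h \in H$ therefore induces a fixed-point-free automorphism of $K$, and Thompson's theorem immediately yields that $K$ is nilpotent. Statement (b) falls out of the same action: $H$ acts regularly on $K \setminus \{1\}$, so the orbit equation gives $|K| - 1 \equiv 0 \pmod{|H|}$. Both parts are essentially routine once the two named theorems are quoted.

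The substantive work is part (c). The Frobenius condition $C_G(h) \leq H$ for every $1 \neq h \in H$ forces any abelian subgroup of $H$ to be cyclic: if $A \leq H$ were abelian and non-cyclic of order $pq$, its two prime-order subgroups would commute, contradicting the rigidity of the centraliser structure. From this one derives via a Burnside--Zassenhaus argument that Sylow $p$-subgroups of $H$ are cyclic for odd $p$ and cyclic or generalized quaternion for $p = 2$. The solvable case with $O(H) = 1$ I would then handle by embedding $H$ faithfully into $\mathrm{Aut}(P)$, where $P$ is its Sylow $2$-subgroup, and reading off the two surviving possibilities ($H$ a $2$-group or $H$ containing $\mathrm{SL}(2,3)$ with index at most $2$).

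I expect the real obstacle to be the non-solvable classification. Identifying $\mathrm{SL}(2,5)$ as essentially the only non-solvable perfect Frobenius complement requires the Brauer--Suzuki theorem (to control the unique involution forced by the generalized quaternion Sylow $2$-subgroup), together with a delicate enumeration of faithful representations of perfect groups with such Sylow structure. The coprimality condition $\pi(Z) \cap \{2,3,5\} = \emptyset$ on the central factor $Z$ must then be extracted by insisting that the full complement, and hence $Z$ itself, continues to act fixed-point-freely on $K$; a prime in $\{2,3,5\}$ shared with $\mathrm{SL}(2,5)$ would produce commuting elements in $H$ violating the Frobenius axiom. This last bookkeeping, while conceptually clear, is where the bulk of the effort in \cite{gor,pas} is concentrated.
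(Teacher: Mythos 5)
The paper gives no proof of this lemma at all; it is quoted as a known classical result with a citation to Gorenstein and Passman, so your sketch is being compared against the standard textbook derivation rather than against anything in the paper. Your outline faithfully reproduces that derivation (Thompson's theorem for (a), the semiregular action of $H$ on $K\setminus\{1\}$ for (b), and the Burnside--Zassenhaus analysis of complements for (c)) and is essentially correct; the one loose step is attributing the exclusion of primes in $\pi(Z)\cap\{2,3,5\}$ to ``commuting elements violating the Frobenius axiom'' --- commuting elements are of course permitted in a complement, and the actual contradiction is that a shared prime $p$ would place a non-cyclic abelian subgroup $Z_p\times Z_p$ inside $H$, contradicting the first assertion of (c) (equivalently, such a subgroup cannot act fixed-point-freely on $K$).
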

%%%%%%%%%%%%%%%%%%%%%%%%%%%%%%%%%%%%%%%%%%%%%%%%%%%%%%%%%%%%%%%%%%%%%%%%%%%%%%%%%%%%%%%%%%%%%%%%%%%%%%%%%%%%%%%
\begin{lemma}\label{2fro}\cite{2fro}
Let $G$ be a 2-Frobenius group, i.e., $G$ has a normal
series $1\lhd H\lhd K\lhd G$, such that $K$ and $G/H$ are Frobenius groups with kernels $H$ and $K/H$,
respectively. If $G$ has even order, then

(i) $G/K$ and $K/H$ are cyclic, $|G/K| \big|  |\textrm{Aut}(K/H)|$
and $(|G/K|,|K/H|)=1$;

(ii) $H$ is a nilpotent group and $G$ is a solvable group.
\end{lemma}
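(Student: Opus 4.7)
My plan is to derive (ii) first and then prove (i) by exploiting the two roles the quotient $K/H$ plays (Frobenius complement of $K$ and Frobenius kernel of $G/H$).

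Nilpotency of $H$ is immediate from Lemma \ref{fro}(a), applied to the Frobenius group $K$ with kernel $H$; the same lemma applied to $G/H$ gives that $K/H$ is nilpotent. Solvability of $G$ will then follow from (i) via the normal series $\se$ whose factors $H$ (nilpotent), $K/H$ (cyclic), and $G/K$ (cyclic) are each solvable, so extensions of solvable groups give the conclusion.

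The core of (i) is to show $K/H$ is cyclic. Viewed as a Frobenius complement in $K$, Lemma \ref{fro}(c) forces every odd-order Sylow subgroup of $K/H$ to be cyclic and the Sylow $2$-subgroup $P$ to be cyclic or generalized quaternion. Because $K/H$ is nilpotent, it is the internal direct product of its Sylow subgroups, so cyclicity of $K/H$ reduces to showing $P$ is cyclic. Here I would use that $G/K$ acts by conjugation fixed-point-freely on $K/H$ (from the Frobenius structure of $G/H$); since $P$ is characteristic in the nilpotent group $K/H$, this action preserves $P$ and restricts fixed-point-freely to it. A generalized quaternion group, however, has a unique involution, lying in its center, and therefore fixed by every automorphism of the group — contradicting fixed-point-freeness of the induced action of any nontrivial element of $G/K$. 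Hence $P$ must be cyclic, and $K/H$ is cyclic.

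Once $K/H$ is cyclic, $\textrm{Aut}(K/H)$ is abelian, and because fixed-point-freeness implies faithfulness of the action, we obtain an embedding $G/K\hookrightarrow \textrm{Aut}(K/H)$; this gives both $|G/K|\,\big|\,|\textrm{Aut}(K/H)|$ and the abelianness of $G/K$. As a Frobenius complement in $G/H$, the group $G/K$ has cyclic odd Sylows and cyclic-or-generalized-quaternion Sylow $2$-subgroup by Lemma \ref{fro}(c); abelianness excludes the generalized quaternion alternative, so $G/K$ is an abelian group with all Sylows cyclic, hence cyclic. The coprimality $(|G/K|,|K/H|)=1$ is built into the Frobenius structure of $G/H$ (coprimality of kernel and complement orders). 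The main obstacle is the elimination of the generalized quaternion case for $P$: the delicate point is to argue cleanly that a fixed-point-free action on the nilpotent group $K/H$ restricts to a fixed-point-free action on each characteristic Sylow subgroup, which is what makes the unique-involution property of generalized quaternion groups bite.
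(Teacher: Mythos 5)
Lemma~\ref{2fro} is one of the paper's quoted preliminaries: it is stated with a citation to Chen's article and the paper supplies no proof of its own, so there is nothing internal to compare your argument against. On its own merits your proof is correct and complete, and it is essentially the standard argument for this result: nilpotency of $H$ and of $K/H$ from the Frobenius-kernel property; cyclicity of the odd Sylow subgroups of $K/H$ from its role as a complement in $K$; elimination of the generalized quaternion possibility via the unique (hence automorphism-fixed) involution, which no fixed-point-free action of the nontrivial group $G/K$ can tolerate; the embedding $G/K\hookrightarrow \textrm{Aut}(K/H)$ from faithfulness of a fixed-point-free action; and cyclicity of the abelian complement $G/K$ from the Sylow structure of Frobenius complements. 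The one point worth flagging is purely bookkeeping: the paper's Lemma~\ref{fro} is stated only for Frobenius groups of \emph{even} order, and when you invoke it for $K$ and for $G/H$ you have not checked that each of these has even order (only $G$ is assumed to). Since parts (a) and (c) of that lemma (Thompson's nilpotency of the kernel, the Burnside--Zassenhaus structure of complements) hold for arbitrary Frobenius groups, this costs nothing mathematically, but if you want to argue strictly from the paper's stated lemmas you should either note that these facts do not require the parity hypothesis or split into cases according to which factor of the series carries the prime $2$.
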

%%%%%%%%%%%%%%%%%%%%%%%%%%%%%%%%%%%%%%%%%%%%%%%%%%%%%%%%%%%%%%%%%%%%%%%%%%%%%%%%%%%%%%%%%%%%
\begin{lemma}\label{mu-pgl}\cite{mu-pgl, pgl}
If $p$ is an odd prime number, then $\mu(PGL(2, p^k)) =
\{p, p^k - 1, p^k + 1\}$ and $\mu(PSL(2, p^k)) =
\{p, (p^k - 1)/2, (p^k + 1)/2\}$. Also we have $\mu(PGL(2, 2^k)) =\mu(PSL(2, 2^k))=
\{2, 2^k - 1, 2^k + 1\}$.
\end{lemma}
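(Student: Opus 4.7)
My strategy is to classify the elements of $GL(2,q)$ with $q=p^k$ via their rational canonical form, then pass to $PGL(2,q)$ by killing the scalar center, and finally descend to $PSL(2,q)$ using the index-$2$ embedding (for odd $q$) or the identity $PSL=PGL$ (for even $q$).

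Every nonscalar $g\in GL(2,q)$ has minimal polynomial of degree exactly $2$, so its characteristic polynomial falls into one of three families: (a) a repeated root $\lambda\in\mathbb{F}_q^\times$ (a Jordan block $\lambda I+N$ with $N$ nilpotent); (b) two distinct roots $\lambda,\mu\in\mathbb{F}_q^\times$ (the split semisimple case, conjugate to a diagonal matrix); (c) an irreducible quadratic factor with roots $\lambda,\lambda^q\in\mathbb{F}_{q^2}\setminus\mathbb{F}_q$ (the non-split semisimple case, conjugate to the multiplication-by-$\lambda$ action of $\mathbb{F}_{q^2}^\times$ on $\mathbb{F}_{q^2}\cong\mathbb{F}_q^2$).

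Passing to the quotient $PGL(2,q)=GL(2,q)/Z$: in case (a), $(\lambda I+N)^p=\lambda^p I$ shows the image has order $p$; in case (b), the image of $\mathrm{diag}(\lambda,\mu)$ has order equal to that of $\lambda\mu^{-1}$ in $\mathbb{F}_q^\times$, which divides $q-1$ and attains $q-1$ for a primitive root; in case (c), the non-split torus embeds as $\mathbb{F}_{q^2}^\times$, meets $Z$ in precisely $\mathbb{F}_q^\times$, and so maps to a cyclic subgroup of $PGL(2,q)$ of order $(q^2-1)/(q-1)=q+1$. Therefore every element order in $PGL(2,q)$ divides one of $p$, $q-1$, $q+1$; since $\gcd(p,q\pm 1)=1$ and $\gcd(q-1,q+1)\le 2$, these three are pairwise incomparable under divisibility (once $q$ is large enough), yielding the claimed $\mu(PGL(2,q))$.

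For $q=2^k$ one has $PSL(2,q)=PGL(2,q)$ and we are done. For odd $q$, $PSL(2,q)$ is the unique subgroup of index $2$ in $PGL(2,q)$; it contains every unipotent (so still has elements of order $p$) and intersects each split (resp.\ non-split) torus in its unique subgroup of index $2$, of order $(q-1)/2$ (resp.\ $(q+1)/2$). A final divisibility check on $\{p,(q-1)/2,(q+1)/2\}$ gives the desired maximal orders. The only point requiring care is case (c): one must realize the non-split torus explicitly via the regular representation of $\mathbb{F}_{q^2}$ on itself as an $\mathbb{F}_q$-vector space and use surjectivity of the norm map $\mathbb{F}_{q^2}^\times\to\mathbb{F}_q^\times$ with kernel of order $q+1$ to pin down the order in $PGL(2,q)$; this is essentially the only nontrivial computation in the proof.
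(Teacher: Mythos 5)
The paper offers no proof of this lemma at all: it is quoted verbatim from the cited literature (Chen--Mazurov--Shi--Vasil'ev--Zhurtov and Akhlaghi--Khatami--Khosravi), so there is nothing in the paper to compare your argument against line by line. Your self-contained derivation is the standard one and it is sound: the trichotomy by rational canonical form (unipotent-times-scalar, split semisimple, non-split semisimple) is exhaustive for nonscalar elements of $GL(2,q)$, the order computations in $PGL(2,q)$ (order $p$ from $(\lambda I+N)^p=\lambda^p I$, divisors of $q-1$ from the split torus, divisors of $q+1$ from $\mathbb{F}_{q^2}^\times/\mathbb{F}_q^\times$) are correct, and the descent to $PSL(2,q)$ via the index-$2$ criterion ``determinant is a square'' correctly cuts each torus down to its index-$2$ subgroup because a primitive root (respectively, a norm of a generator of $\mathbb{F}_{q^2}^\times$) is a non-square. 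Two small points deserve explicit mention if this were to be written out in full. First, your parenthetical ``once $q$ is large enough'' is doing real work: for $q=3$ the stated sets are not literally the maximal elements under divisibility (e.g.\ $2\mid 4$ in $\pi_e(PGL(2,3))$, and $1\in\{p,(q-1)/2,(q+1)/2\}$ for $PSL(2,3)$), so the lemma as quoted implicitly assumes $q\ge 4$; this is harmless for the paper, which only invokes it for larger $q$, but a careful write-up should say so. Second, for $PSL$ one should also verify pairwise incomparability of $p$, $(q-1)/2$, $(q+1)/2$; this is immediate since the latter two are consecutive-difference-one integers, hence coprime, and neither is divisible by $p$. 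With those remarks your outline constitutes a complete and correct proof, and indeed gives the reader more than the paper does.
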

%%%%%%%%%%%%%%%%%%%%%%%%%%%%%%%%%%%%%%%%%%%%%%%%%%%%%%%%%%%%%%%%%%%%%%%%%%%%%%%%%%%%%%%%%%%%
\begin{lemma}\label{dio1}\cite{diophan}
The equation $p^m -q^n = 1$, where $p$ and $q$ are primes and $m, n > 1$ has only one solution, namely $3^2 - 2^3 = 1$.
\end{lemma}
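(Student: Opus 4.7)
My plan is to reduce the problem by parity and then kill each remaining case with an elementary factorization argument. Since the difference of two odd numbers is even, at least one of $p$ and $q$ must equal $2$; both clearly cannot, so exactly one does. This leaves two sub-cases to dispatch.

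If $p = 2$, then $q^n = 2^m - 1$ with $q$ an odd prime and $m, n \geq 2$. I would first rule out odd $n$ via the factorization $q^n + 1 = (q+1)(q^{n-1} - q^{n-2} + \cdots + 1)$: the second factor is a sum of an odd number of odd terms, hence odd and strictly greater than $1$, so it cannot divide $2^m$. Thus $n$ must be even, but then $q^n = (q^{n/2})^2 \equiv 0$ or $1 \pmod 4$, whereas $2^m - 1 \equiv 3 \pmod 4$ for $m \geq 2$. This contradiction shows the case yields no solutions.

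If $q = 2$, then $p^m = 2^n + 1$ with $p$ an odd prime. By the analogous factorization $p^m - 1 = (p-1)(p^{m-1} + \cdots + 1)$, assuming $m$ odd forces the second factor to be odd, so it must equal $1$, contradicting $m \geq 2$. Hence $m$ is even; writing $m = 2s$ gives $(p^s - 1)(p^s + 1) = 2^n$, a product of two powers of $2$ differing by $2$. The only such pair is $(2, 4)$, so $p^s = 3$, whence $p = 3$, $s = 1$, $m = 2$ and $n = 3$, yielding the claimed unique solution $3^2 - 2^3 = 1$. No serious obstacle is anticipated: the full Catalan--Mihailescu theorem for arbitrary integer bases is famously deep, but the restriction here that the bases be \emph{prime} collapses the problem to the elementary parity and modular arguments above; the only care needed is in the two cyclotomic-style factorizations and the modulo-$4$ comparison.
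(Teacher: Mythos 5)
Your argument is correct and complete. One can check each step: the parity observation forces exactly one of $p,q$ to equal $2$; in the case $p=2$ the factorization $q^n+1=(q+1)(q^{n-1}-q^{n-2}+\cdots+1)$ for odd $n$ does produce an odd factor greater than $1$ dividing $2^m$, and the residue $2^m-1\equiv 3\pmod 4$ against an odd square $\equiv 1\pmod 4$ kills even $n$; in the case $q=2$ the factor $1+p+\cdots+p^{m-1}$ is odd precisely when $m$ is odd and exceeds $1$ since $m\geq 2$, and the final step $(p^s-1)(p^s+1)=2^n$ with two powers of $2$ differing by $2$ correctly isolates $(2,4)$, giving $3^2-2^3=1$. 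The paper itself offers no proof of this lemma: it is quoted verbatim from Crescenzo's article on Diophantine equations arising in finite group theory, the same source used for the harder companion result on $p^m-2q^n=\pm 1$ (Lemma~\ref{dio2}). So your route is genuinely different in character: where the paper outsources the statement to the literature, you exploit the hypothesis that the bases are \emph{prime} to collapse everything to parity, cyclotomic-style factorizations, and a mod~$4$ comparison. What the citation buys the paper is uniformity (Crescenzo's machinery also handles Lemma~\ref{dio2}, which really does require more than these elementary tricks); what your argument buys is self-containedness for this particular lemma, and it is worth noting explicitly that the restriction to prime bases is what keeps you far away from the depth of the general Catalan--Mih\u{a}ilescu theorem.
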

%%%%%%%%%%%%%%%%%%%%%%%%%%%%%%%%%%%%%%%%%%%%%%%%%%%%%%%%%%%%%%%%%%%%%%%%%%%%%%%%%%%%%%%%%%%%
\begin{lemma}\label{dio2}\cite{diophan}
With the exceptions of the relations $(239)^2-2(13)^4 = -1$
and $3^5 - 2(11)^2 = 1$ every solution of the equation
\begin{equation*}
p^m - 2q^n = \pm1;~~ p,~ q ~prime; ~~m, n > 1
\end{equation*}
has exponents $m = n = 2$; i.e. it comes from a unit $p-q \cdot 2^{1/2}$ of the quadratic field
$Q(2^{1/2})$ for which the coefficients $p$ and $q$ are primes.
\end{lemma}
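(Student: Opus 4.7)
The plan is to place the equation $p^m - 2q^n = \pm 1$ inside the ring of integers $\mathbb{Z}[\sqrt{2}]$ of the real quadratic field $\mathbb{Q}(\sqrt{2})$, whose norm form is $N(a + b\sqrt{2}) = a^2 - 2b^2$ and whose fundamental unit is $\varepsilon = 1 + \sqrt{2}$. In the diagonal case $m = n = 2$ the equation reads $N(p + q\sqrt{2}) = \mp 1$, so $p + q\sqrt{2} = \pm \varepsilon^k$ for some integer $k$, and the conclusion is exactly the statement that such a solution arises from a unit $p - q\sqrt{2}$ with $p, q$ prime. Hence the whole task is to show that, apart from the two explicit coincidences listed, no solution with $\max(m, n) \geq 3$ exists.

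I would first dispose of the small-prime cases. If $p = 2$, then $2^m \mp 1 = 2 q^n$, and comparing $2$-adic valuations forces $m = 1$ (contradicting $m > 1$); if $q = 2$, then $p^m = 2^{n+1} \pm 1$, which Lemma~\ref{dio1} together with elementary congruences reduces to a short list of sub-cases, none yielding both $m, n > 1$. Thus one may assume $p, q$ are both odd.

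For the core case I would run a Zsygmondy-style argument in two directions. Writing the equation as $p^m = 2 q^n \pm 1$ and factoring $p^m \mp 1 = \prod_{d \mid m} \Phi_d(p)$ into cyclotomic pieces, every primitive prime divisor of some $\Phi_d(p)$ with $d > 1$ must lie in $\{2, q\}$; since the values $\Phi_d(p)$ grow rapidly, this constrains $m$ to a short list of small exponents. Applying the same idea to $2 q^n = p^m \mp 1$, now viewed as a cyclotomic product at $p$, constrains $n$ analogously. The two exceptional solutions fall out at the boundary of this analysis: $\Phi_5(3) = (3^5-1)/2 = 121 = 11^2$ yields $3^5 - 2 \cdot 11^2 = 1$, while $\varepsilon^{7} = 239 + 169\sqrt{2}$ together with the accident $169 = 13^2$ yields $239^2 - 2 \cdot 13^4 = -1$.

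The main obstacle will be the verification of the small-exponent sub-cases, because that is precisely where the two listed exceptions live: one has to check every cyclotomic value $\Phi_d(p)$ with $d$ in a bounded range against the constraint of being a prime power of $q$ (up to a small $2$-part), and simultaneously rule out further accidental identities of the form $\varepsilon^k = p + q^{n/2}\sqrt{2}$ whose second coordinate is a nontrivial prime power. This case analysis is the technical heart of the Ljunggren--Nagell method that underlies \cite{diophan}, and carrying it out cleanly, while ensuring the two genuine exceptions are recovered and nothing else, is where almost all of the work lies.
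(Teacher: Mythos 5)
This lemma is a quoted preliminary result: the paper offers no proof of its own and simply cites Crescenzo \cite{diophan}, so there is no in-paper argument to compare against. Your proposal is an outline rather than a proof, and it has a genuine gap exactly where you concede that ``almost all of the work lies.'' The difficulty is that the gap cannot be closed by the tools you name. A Zsygmondy-style analysis of $p^m \mp 1 = 2q^n$ does tell you that every primitive prime divisor of $\Phi_d(p)$ for the relevant $d>1$ must equal $q$ (it cannot be $2$, since $2 \mid p-1$ for odd $p$), i.e.\ that each such cyclotomic value is essentially a power of $q$. But this does not ``constrain $m$ to a short list of small exponents'': when $m$ is prime there is a single nontrivial factor $(p^m-1)/(p-1)$, and requiring it to be a $q$-power is an instance of the Nagell--Ljunggren problem, which is itself deep and certainly not settled by the observation that $\Phi_m(p)$ grows rapidly --- rapid growth is perfectly compatible with being a prime power, as $(3^5-1)/2 = 11^2$ itself shows.

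The actual proof in \cite{diophan} reduces, after the unit-group step you describe correctly, to a handful of specific quartic equations such as $x^2 - 2y^4 = \pm 1$ and $x^4 - 2y^2 = \pm 1$, and then invokes Ljunggren's theorems (proved by Skolem's $p$-adic method), e.g.\ that $x^2 - 2y^4 = -1$ has only the positive solutions $(1,1)$ and $(239,13)$ --- which is precisely where the exception $(239)^2 - 2(13)^4 = -1$ enters. Those quartic results are the irreplaceable input; without citing or reproving them, your sketch does not establish the lemma. Since the paper treats this statement as a black-box citation, the honest course is to do the same, rather than to gesture at a self-contained proof whose hardest step is not supplied.
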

%%%%%%%%%%%%%%%%%%%%%%%%%%%%%%%%%%%%%%%%%%%%%%%%%%%%%%%%%%%%%%%%%%%%%%%%%%%%%%%%%%%%%%%%%%%%
%%%%%%%%%%%%%%%%%%%%%%%%%%%%%%%%%%%%%%%%%%%%%%%%%%%%%%%%%%%%%%%%%%%%%%%%%%%%%%%%%%%%%%%%%%%%
\section{Main Results}
%%%%%%%%%%%%%%%%%%%%%%%%%%%%%%%%%%%%%%%%%%%%%%%%%%%%%%%%%%%%%%%%%%%%%%%%%%%%%%%%%%%%%%%%%%%%
%%%%%%%%%%%%%%%%%%%%%%%%%%%%%%%%%%%%%%%%%%%%%%%%%%%%%%%%%%%%%%%%%%%%%%%%%%%%%%%%%%%%%%%%%%%%
\begin{lemma}\label{out-pgl}
Let $G$ be an almost simple group related to the simple group $PSL(2,p^n)$ where $p$ is a prime number and $p^n>3$. If ${\rm deg}(p)=0$ in the prime graph of $G$, then either $G\cong {\rm PSL}(2,p^n)$ or $G\cong {\rm PGL}(2,p^n)$.
\end{lemma}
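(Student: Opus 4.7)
The plan is to leverage the structure of $\mathrm{Out}(\mathrm{PSL}(2,p^n))$ together with the hypothesis $\deg(p)=0$ to rule out any field-automorphism contribution to $G$. Recall that $|\mathrm{PGL}(2,p^n):\mathrm{PSL}(2,p^n)|=\gcd(2,p^n-1)\leq 2$, and that $\mathrm{Aut}(\mathrm{PSL}(2,p^n))/\mathrm{PGL}(2,p^n)$ is cyclic of order $n$, generated by the Frobenius $\phi:x\mapsto x^p$. Put $G_0:=G\cap\mathrm{PGL}(2,p^n)$; then $G_0\in\{\mathrm{PSL}(2,p^n),\mathrm{PGL}(2,p^n)\}$, while $G/G_0$ embeds into the cyclic field-automorphism group $C_n$. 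The whole task reduces to proving $G=G_0$.

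Suppose for contradiction that $G\neq G_0$. Choose a prime $r$ dividing $|G/G_0|$ and take $g\in G$ of $r$-power order whose image in $G/G_0$ is nontrivial (this exists as the $r$-part of any preimage of an element of order $r$ in the cyclic quotient). Conjugation by $g$ then acts on $\mathrm{PSL}(2,p^n)$ as an outer automorphism whose image modulo the diagonal subgroup is a field automorphism of order $r^s$ for some $s\geq 1$ with $r^s\mid n$. A standard conjugation argument in $\mathrm{Aut}(\mathrm{PSL}(2,p^n))$, absorbing any diagonal twist via a Lang--Steinberg-type argument for the rank-one group, allows us to assume that $g$ acts as the standard field automorphism $\phi^{n/r^s}$, whose fixed subgroup is precisely $\mathrm{PSL}(2,p^{n/r^s})$. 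In particular, $C_{\mathrm{PSL}(2,p^n)}(g)\supseteq\mathrm{PSL}(2,p^{n/r^s})$.

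Now split on whether $r=p$. If $r\neq p$, pick any nontrivial $p$-element $x\in\mathrm{PSL}(2,p^{n/r^s})$; since $[x,g]=1$, the product $xg$ has order divisible by $pr$, forcing the edge $p\sim r$ in $\ga(G)$ and contradicting $\deg(p)=0$. If $r=p$, observe that $\mathrm{PSL}(2,p^{n/p^s})$ has order divisible by some prime $q\neq p$ (trivially checked for every $p^{n/p^s}\geq 2$, e.g. via Lemma \ref{mu-pgl}); choosing a $q$-element $y$ there, $yg$ has order divisible by $pq$, and again $p\sim q$, a contradiction. Hence $G=G_0\in\{\mathrm{PSL}(2,p^n),\mathrm{PGL}(2,p^n)\}$. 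The principal obstacle is the centralizer identification in the second paragraph, which is where the conjugation/Lang--Steinberg ingredient enters; everything else is elementary prime-graph reasoning.
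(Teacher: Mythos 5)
Your overall strategy is the same as the paper's: reduce to showing that $G$ contains no outer element inducing a field-type automorphism, by exhibiting a $p$-element centralized by any such element and thereby forcing an edge at $p$. The odd-$r$ branch of your argument is sound. The genuine gap is exactly the step you flag as the ``principal obstacle'': the claim that a Lang--Steinberg-type conjugation lets you assume $g$ acts as the standard field automorphism $\phi^{n/r^s}$ with fixed subgroup $\mathrm{PSL}(2,p^{n/r^s})$. Two things go wrong when $r=2$ and $p$ is odd. First, your $g$ is only guaranteed to have image of order $r^s$ in the cyclic quotient $G/G_0$; as an automorphism of $S=\mathrm{PSL}(2,p^n)$ it may have strictly larger $2$-power order, because the relevant coset of $\mathrm{Inn}(S)$ may contain no element of order $r^s$ at all (the extension need not split). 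Second, an outer $2$-element whose image in $\mathrm{Out}(S)$ involves the diagonal class $\delta$ is \emph{not} conjugate to a field automorphism, and its centralizer in $S$ can be a $p'$-group. This is not a repairable technicality: for $S=\mathrm{PSL}(2,9)$ the group $G=\mathrm{M}_{10}=\mathrm{PSL}(2,9)\cdot 2_3$ has element orders $\{1,2,3,4,5,8\}$, so $\deg(3)=0$ in $\ga(\mathrm{M}_{10})$, yet $\mathrm{M}_{10}$ is neither $\mathrm{PSL}(2,9)$ nor $\mathrm{PGL}(2,9)$. Thus the statement itself fails at $p^n=9$, and any proof must exclude or separately treat that case.

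For context, the paper's own proof founders on the same point: it asserts that every diagonal-field automorphism $\delta\sigma^f$ fixes an element of order $p$, exhibited as a diagonal matrix with $a=d$ of multiplicative order $p$ in $GF(p^n)^{\#}$ --- but $|GF(p^n)^{\#}|=p^n-1$ is coprime to $p$, so no such element exists (and a diagonal, i.e.\ semisimple, element can never have order $p$). So your proposal reproduces the paper's approach, including its defect; the correct conclusion requires either the hypothesis $p^n\neq 9$ (consistent with the restriction $q>9$ in the paper's abstract) or an explicit additional case for $\mathrm{M}_{10}$, after which your argument for odd $r$, together with the standard conjugacy of prime-order field automorphisms, can be made to work for the remaining configurations.
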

\begin{proof}
By \cite{kleidman}, we know that ${\rm Out}({\rm PSL}(2 , p^n))\cong \langle \delta\rangle\times \langle \sigma \rangle$, where $\delta$ is a diagonal automorphism with order $2$ and $\sigma$ is field automorphism of ${\rm PSL}(2,p^n)$ with order $n$. Also we know that these automorphisms acts on ${\rm SL}(2,p^n)$ as follows:
\begin{equation*}
 \left(\begin{array}{cc}
a & b  \\
c & d
\end{array} \right)^{\delta}=
\left(\begin{array}{cc}
a & \alpha b  \\
\alpha c & d
\end{array} \right)~~~\&~~~
\left(\begin{array}{cc}
a & b  \\
c & d
\end{array} \right)^{\sigma}=
\left(\begin{array}{cc}
a^p & b^p  \\
c^p & d^p
\end{array} \right),
\end{equation*}
where $\alpha$ is the involution in the multiplicative group of the field $GF(p^n)$, i.e. $GF(p^n)^\#$.
So if in the above relations, we put $b=c=0$ and $a=d$ such that the order of $a$ in $GF(p^n)^\#$ is $p$, then we deduce that any field automorphism $\sigma^f$ and any diagonal-field automorphism $\delta \sigma^f$ such that $1\leq f\leq n$ has a fixed point in ${\rm PSL}(2,p^n)$ with order $p$.

The above discussion implies that when $G$ is an almost simple group related to ${\rm PSL}(2,p^n)$ in which ${\rm deg}(p)=0$ in its prime graph, then $G$ is an extension of this simple group by a diagonal automorphism of ${\rm PSL}(2,p^n)$. Therefore, $G$ is isomorphic to ${\rm PGL}(2,p^n)$.
Finally, by Lemma~\ref{out-pgl}, we get the result.
\end{proof}
%%%%%%%%%%%%%%%%%%%%%%%%%%%%%%%%%%%%%%%%%%%%%%%%%%%%%%%%%%%%%%%%%%%%%%%%%%%%%%%%%%%%%%%%%%%%
\begin{lemma}\label{fro-2fro}
Let $G=K:C$ be a Frobenius group with kernel $K$ and complement $C$. If $C$ is solvable, then the prime graph of $C$ is a complete graph.
\end{lemma}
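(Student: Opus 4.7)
The plan is to show that any two distinct primes $p,q\in\pi(C)$ are adjacent in $\ga(C)$, i.e.\ that $C$ contains an element of order $pq$. The main ingredient is Lemma~\ref{fro}(c) applied to $C$: every subgroup of $C$ of order $pq$ is cyclic, every Sylow subgroup of $C$ of odd order is cyclic, and a Sylow $2$-subgroup of $C$ is cyclic or generalized quaternion.

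Since $C$ is solvable, Hall's theorem produces a Hall $\{p,q\}$-subgroup $H\leq C$, of order $p^aq^b$ with $a,b\geq 1$. I would next choose a minimal normal subgroup $N$ of $H$. By solvability of $H$, $N$ is an elementary abelian $r$-group for some $r\in\{p,q\}$; by symmetry between $p$ and $q$, assume $r=p$. Then $N$ is contained in a Sylow $p$-subgroup $P$ of $H$, which is also a $p$-subgroup of $C$ and hence lies inside a cyclic or generalized quaternion group. A cyclic $p$-group has a unique subgroup of order $p$, and a generalized quaternion $2$-group has a unique involution (its central element), so every elementary abelian subgroup of $P$ has order at most $p$. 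Since $N$ is non-trivial, $|N|=p$.

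Now choose any $y\in H$ of order $q$, which exists because $q\mid|H|$. Because $N\triangleleft H$, the product $N\langle y\rangle$ is a subgroup of $H$, and since $\gcd(|N|,|\langle y\rangle|)=\gcd(p,q)=1$ its order is exactly $pq$. Lemma~\ref{fro}(c) then forces $N\langle y\rangle$ to be cyclic of order $pq$, so its generator is an element of order $pq$ in $C$. This supplies the edge $p\sim q$ in $\ga(C)$, and as $p,q$ were arbitrary this shows $\ga(C)$ is complete.

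The only delicate point in the argument is pinning down $|N|=p$; this is exactly where the restricted Sylow structure of Frobenius complements is used. Everything else reduces to Hall's theorem for solvable groups together with a single invocation of the ``subgroups of order $pq$ are cyclic'' clause of Lemma~\ref{fro}(c), so no additional structural classification of solvable Frobenius complements is needed.
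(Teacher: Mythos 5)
Your proof is correct, but it takes a genuinely different route from the paper's. The paper argues by contradiction: assuming $p\not\sim q$ in $\ga(C)$, it takes a Hall $\{p,q\}$-subgroup $H$ of $C$ with $O_p(H)\neq 1$, notes that non-adjacency forces $O_p(H):H_q$ to be a Frobenius group, so that $K:(O_p(H):H_q)$ is a $2$-Frobenius subgroup of $G$; it then invokes Lemma~\ref{2fro} (really, the standard fact that in a $2$-Frobenius series the top factor must centralize a nontrivial element of the bottom one, which ultimately rests on Thompson's theorem on fixed-point-free automorphisms of prime order) to give $H_q$ a fixed point in $K$, contradicting that $H_q$ lies in the complement. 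You instead argue directly and never touch the kernel $K$: a minimal normal subgroup of the Hall $\{p,q\}$-subgroup must have prime order because Sylow subgroups of a Frobenius complement are cyclic or generalized quaternion, and then the clause of Lemma~\ref{fro}(c) saying that every subgroup of order $pq$ of a Frobenius complement is cyclic immediately produces an element of order $pq$. Your argument is more elementary and self-contained, avoiding the $2$-Frobenius machinery entirely. One shared caveat: Lemma~\ref{fro} is stated only for Frobenius groups of even order, while the present lemma carries no parity hypothesis; the structural facts you use hold for arbitrary Frobenius complements (and when $|G|$ is odd, $C$ is a Z-group, so the conclusion is immediate anyway), but strictly speaking the citation needs that remark --- the paper's own proof has the same issue with its appeal to Lemma~\ref{2fro}.
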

\begin{proof}
First, let $p$ and $q$ be two different prime numbers included in $\pi(C)$. Also let $p$ and $q$ are not adjacent in $\ga(C)$. Since $C$ is solvable, $C$ has a Hall $\{p,q\}$-subgroup $H$. Without lose of generality, we assume that $O_{p}(H)\neq 1$. So if $H_q$ is a Sylow $q$-subgroup of $H$, then $O_{p}(H):H_q$ is a Frobenius subgroup with kernel $O_{p}(H)$. This implies that $K:(O_{p}(H):H_q)$ is a $2$-Frobenius subgroup of $G$. So by Lemma~\ref{2fro}, we get that $H_q$ has a fixed point in $K$ which is a contradiction since $H_q$ is a subgroup of complement $C$.
\end{proof}
%%%%%%%%%%%%%%%%%%%%%%%%%%%%%%%%%%%%%%%%%%%%%%%%%%%%%%%%%%%%%%%%%%%%%%%%%%%%%%%%%%%%%%%%%%%%
\begin{lemma}\label{fro-me}
Let $G=K:C$ be a Frobenius group with kernel $K$ and complement $C$. If either $3\mid |K|$ or ${\rm deg}(3)=0$ in $\ga(G)$, then $G$ is solvable and $\ga(G)$ has two connected components $\pi(K)$ and $\pi(C)$ which are complete.
\end{lemma}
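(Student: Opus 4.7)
The plan is to reduce both claims to the single assertion that the complement $C$ is solvable; once that is established, the structure of $\ga(G)$ will follow quickly from Lemma~\ref{fro} and Lemma~\ref{fro-2fro}. The only real work lies in ruling out the non-solvable Frobenius complement case, which is where the hypothesis on the prime $3$ will be used.

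First, I would suppose for contradiction that $C$ is non-solvable. By Lemma~\ref{fro}(c), $C$ then contains a subgroup of index at most $2$ isomorphic to $Z\times {\rm SL}(2,5)$, so $3\in\pi(C)$; moreover, since the central involution of ${\rm SL}(2,5)$ commutes with an element of order $3$, there is an element of order $6$ in $C$, giving $2\sim 3$ in $\ga(C)\subseteq\ga(G)$. Now Lemma~\ref{fro}(b) yields $\gcd(|K|,|C|)=1$, hence $\pi(K)\cap\pi(C)=\emptyset$. If $3\mid |K|$, this contradicts $3\in\pi(C)$. If instead ${\rm deg}(3)=0$ in $\ga(G)$, then $3$ is either absent from $\pi(G)$ (again contradicting $3\in\pi(C)$) or isolated in $\ga(G)$ (contradicting $2\sim 3$). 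Either way, $C$ must be solvable, and since $K$ is nilpotent by Lemma~\ref{fro}(a), $G$ is solvable as well.

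With $C$ solvable, Lemma~\ref{fro-2fro} gives that $\ga(C)$ is complete, so $\pi(C)$ induces a complete subgraph of $\ga(G)$; and since $K$ is nilpotent, any two primes in $\pi(K)$ are realized by commuting Sylow elements and are therefore adjacent in $\ga(G)$. To see that $\pi(K)$ and $\pi(C)$ lie in distinct components, I would suppose $g\in G$ has order $pq$ with $p\in\pi(K)$ and $q\in\pi(C)$ and decompose $g=xy$ into commuting factors of those orders; because every element of a Frobenius group either lies in $K$ or is conjugate into $C$, we get either $y\in K$ (forcing $q\mid|K|$) or $x^h\in C$ for some $h$ (forcing $p\mid |C|$), both contradicting $\gcd(|K|,|C|)=1$. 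The main obstacle in this plan is the first step, which relies on the arithmetic coincidence that ${\rm SL}(2,5)$ simultaneously puts $3$ into $\pi(C)$ and creates the edge $2\sim 3$ in $\ga(C)$, precisely what both forms of the hypothesis forbid; the subsequent bookkeeping on components is routine.
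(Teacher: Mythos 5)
Your proof is correct, and it reaches the key point --- solvability of the complement $C$ --- by a genuinely different route from the paper. The paper splits into the two hypotheses: for $3\mid |K|$ it simply asserts that Lemma~\ref{fro} gives $C$ solvable (leaving implicit exactly the coprimality argument you spell out), and for ${\rm deg}(3)=0$ it argues that if $|C|$ were even then $Z(C)\neq 1$, the central involution would produce an element of order $6$, hence $2\sim 3$, a contradiction; so $|C|$ is odd and $C$ is solvable by the odd-order theorem. You instead run a single unified argument: assume $C$ non-solvable, invoke the $Z\times {\rm SL}(2,5)$ subgroup from Lemma~\ref{fro}(c) to get both $3\in\pi(C)$ and the edge $2\sim 3$ inside $\ga(C)$, and observe that each hypothesis separately forbids this. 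Your version avoids any appeal to Feit--Thompson and treats both hypotheses symmetrically, at the price of leaning on the harder classification statement in Lemma~\ref{fro}(c); the paper's version is shorter in the first case but quietly uses a deep theorem in the second. You are also more careful than the paper in verifying that $\pi(K)$ and $\pi(C)$ really form two complete connected components: the paper reads this off from Lemma~\ref{fro} as standard Frobenius folklore, whereas you prove the separation directly from the partition of $G$ into $K$ and conjugates of $C$. Both proofs are sound; yours is the more self-contained of the two.
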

\begin{proof}
By Lemma~\ref{fro}, it is sufficient to show that $\ga(C)$ is a complete graph. First, let $3\mid |K|$. Thus by Lemma~\ref{fro}, we get that $C$ is solvable and so by Lemma~\ref{fro-2fro}, $\ga(C)$ is complete.

Now let ${\rm deg}(3)=0$ in $\ga(G)$ and $3\nmid |K|$. If $C$ has even order, then $Z(C)\neq1$ which is impossible since ${\rm deg}(3)=0$ in $\ga(G)$. Hence $C$ has odd order and so $C$ is solvable and by Lemma~\ref{fro-2fro}, we get the result.
\end{proof}
%%%%%%%%%%%%%%%%%%%%%%%%%%%%%%%%%%%%%%%%%%%%%%%%%%%%%%%%%%%%%%%%%%%%%%%%%%%%%%%%%%%%%%%%%%%%
\begin{lemma}\label{2fro-graph}
Let $G$ be a 2-Frobenius group with normal
series $1\lhd H\lhd K\lhd G$, such that $K$ and $G/H$ are Frobenius groups with kernels $H$ and $K/H$,
respectively. Then the connected components of the prime graph of $G$ are complete.
\end{lemma}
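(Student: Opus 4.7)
My plan is to apply Lemma~\ref{2fro} to extract the structure of $G$ (implicitly assuming $G$ has even order, as in the ambient applications) and then verify the two connected components of $\ga(G)$ are cliques by case analysis.

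By Lemma~\ref{2fro}, $K/H$ and $G/K$ are cyclic, $(|K/H|,|G/K|)=1$, $H$ is nilpotent, and $G$ is solvable. Combined with $(|H|,|K/H|)=1$ coming from the Frobenius structure of $K$, this gives a disjoint partition $\pi(G)=\pi_1\sqcup\pi_2$, where $\pi_1=\pi(K/H)$ and $\pi_2=\pi(H)\cup\pi(G/K)$. To see there is no edge between $\pi_1$ and $\pi_2$, suppose $g\in G$ has order $pq$ with $p\in\pi_1$, $q\in\pi_2$, and write $g=g_pg_q$ with commuting prime-power parts. Since $|G|_p=|K/H|_p$, after conjugation $g_p$ lies in a Frobenius complement $C\cong K/H$ of $K$. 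The image $g_qH$ then centralises $g_pH\in(K/H)^\#$ in $G/H$; since $G/H$ is Frobenius with kernel $K/H$, this forces $g_qH\in K/H$, i.e., $g_q\in K$. The Frobenius structure of $K$ together with the cyclicity of $C$ now gives $g_q\in C_K(g_p)=C$, hence $q\in\pi(C)=\pi_1$, a contradiction.

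Cliqueness of $\pi_1$ follows at once from the cyclicity of $C\le G$. For $\pi_2$ I split into three cases. When both primes lie in $\pi(H)$, nilpotency of $H$ supplies a product-order element. When both lie in $\pi(G/K)\setminus\pi(H)$, neither divides $|K|$, so a preimage in $G$ of a product-order element of the cyclic $G/K$ has, after a suitable power, order exactly that product. The main case is $p\in\pi(H)$ and $r\in\pi(G/K)\setminus\pi(H)$. Setting $P=O_p(H)\lhd G$ and $R$ a Sylow $r$-subgroup of $G$ (which is cyclic, with $R\cap K=1$ since $r\notin\pi(K)$): if $C_P(R)\neq1$ I extract an element of order $p$ in $C_P(R)$ commuting with one of order $r$ in $R$, giving an element of order $pr$. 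Otherwise $R$ acts fixed-point-freely on $P$, and I seek a contradiction as follows.

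The $|H|$ Frobenius complements of $K$ are the $H$-conjugates of $C$, and $R$ permutes them under conjugation; as $|R|$ is an $r$-power coprime to $|H|$, an orbit-counting argument forces $R$ to fix some complement, which I rename $C$. Thus $CR=C\rtimes R\le G$, and since the image of $R$ in $G/K$ acts fixed-point-freely on $K/H\cong C$, the group $CR$ is itself Frobenius with kernel $C$ and complement $R$. Every element of $CR\setminus C$ is conjugate in $CR$ to an element of $R$, hence is an $r$-element of $G$; by the conjugacy of Sylow $r$-subgroups and the assumption that $R$ is fixed-point-free on $P$, each such element acts fixed-point-freely on $P$. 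Combined with $C$'s fixed-point-free action on $P$, the whole group $CR$ acts fixed-point-freely on $P$, so $P\rtimes CR$ is Frobenius with complement $CR$. But for any $p_0\in\pi(C)$, $CR$ contains a non-abelian subgroup of order $p_0r$, contradicting Lemma~\ref{fro}(c) on Frobenius complements. The chief obstacle is precisely this mixed case, and the key step is the orbit-counting argument producing a Frobenius complement of $K$ normalised by $R$, which then enables the construction of the forbidden Frobenius complement $CR$.
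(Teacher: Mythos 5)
Your proof is correct in substance but follows a completely different (and far more complete) route than the paper's. The paper disposes of this lemma in two lines: it invokes the nilpotency of $H$ and the cyclicity of $K/H$ and $G/K$ to ``assume that $K$ is a $p$-subgroup and $G/K$ is a $q$-subgroup'' and then cites Lemma~\ref{2fro}; as written this reduction is not justified in general (nothing forces $K$ to be a $p$-group), and it really only matches the special configuration used later in Theorem~\ref{main1}(a-2), where $H$ is a $3'$-group, $K/H$ a cyclic $3$-group and $G/K\cong Z_2$. You instead prove the general statement: you identify the two components $\pi(K/H)$ and $\pi(H)\cup\pi(G/K)$, show they are non-adjacent via centralizer arguments in the two Frobenius layers, and handle the only genuinely hard adjacency --- $p\in\pi(H)$ against $r\in\pi(G/K)\setminus\pi(H)$ --- by the orbit-counting argument producing an $R$-invariant complement $C$ of $K$ and then the forbidden non-cyclic subgroup of order $p_0r$ inside the Frobenius complement $CR$ of $P\rtimes CR$. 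This is the standard correct way to establish the lemma, and it buys a proof that actually covers all $2$-Frobenius groups rather than the single case the paper needs.

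Two small points to tighten. First, the dichotomy in the mixed case should be on whether some nontrivial $r$-element of $G$ fixes a nontrivial element of $P$, not on whether $C_P(R)\neq 1$: if $R$ is cyclic of order $r^2$ one can have $C_P(R)=1$ while $C_P(\Omega_1(R))\neq 1$, so $C_P(R)=1$ does not by itself give that $R$ acts fixed-point-freely on $P$. Your later steps use exactly the statement ``every nontrivial $r$-element acts fixed-point-freely on $P$,'' whose negation already yields an element of order $pr$, so the repair is immediate. Second, both you and the paper silently assume $|G|$ is even so that Lemma~\ref{2fro} applies (and Lemma~\ref{fro}(c) is likewise stated only for even order, though the facts you use from it hold for all Frobenius complements); this hypothesis should either be added to the statement or discharged explicitly.
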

\begin{proof}
Since $H$ is nilpotent and $G/K$ and $K/H$ are cyclic groups, we may assume that $K$ is a $p$-subgroup and $G/K$ is a $q$-subgroup of $G$ such that $p$ and $q$ are some prime divisors of $|G|$. Hence by Lemma~\ref{2fro}, we get the result.
\end{proof}
%%%%%%%%%%%%%%%%%%%%%%%%%%%%%%%%%%%%%%%%%%%%%%%%%%%%%%%%%%%%%%%%%%%%%%%%%%%%%%%%%%%%%%%%%%%%
\begin{lemma}\label{3cc}
Let $G$ be a finite group with even order. If $3\mid |G|$ and ${\rm deg}(3)=0$ in the prime graph of $G$, then $G$ is a $3CC$-group.
\end{lemma}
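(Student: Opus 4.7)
The plan is two-step. First, translate the graph hypothesis $\deg(3) = 0$ into the intrinsic $C\theta\theta$ property that $C_G(x)$ is a $3$-group for every $3$-element $x$. Second, upgrade this to the $3CC$ condition by isolating a Sylow $3$-subgroup $M$ with $C_G(x) \leq M$ for every $1 \neq x \in M$.

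For step one, let $x$ be a nontrivial $3$-element and $y \in C_G(x)$. Decompose $y = y_3 y_{3'}$ into its commuting $3$- and $3'$-parts; both centralize $x$. If $y_{3'} \neq 1$, then a suitable power of $y_{3'}$ has prime order $p \neq 3$ and commutes with $x$, producing an element of order $3p$ in $G$ and so an edge $3 \sim p$ in $\ga(G)$, contradicting $\deg(3) = 0$. Hence $C_G(x)$ is a $3$-group.

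For step two, fix any Sylow $3$-subgroup $M$. For $z \in Z(M) \setminus \{1\}$ one has $M \leq C_G(z)$ and $C_G(z)$ is a $3$-group by step one, so $C_G(z) = M$. For a general $1 \neq x \in M$, $C_G(x)$ is a $3$-group containing $x$, so $C_G(x) \leq P$ for some Sylow $3$-subgroup $P$ of $G$ with $x \in P$; the $3CC$ conclusion amounts to being able to take $P = M$, equivalently, to the trivial-intersection property on Sylow $3$-subgroups of $G$. I would argue TI by a standard maximality trick: choose distinct Sylow $3$-subgroups $M, M'$ with $|M \cap M'| = |D|$ as large as possible and $D \neq 1$; observe that $N_M(D)$ and $N_{M'}(D)$ strictly extend $D$ inside the $3$-groups $M, M'$; then, viewing them as $3$-subgroups of $N_G(D)$ and noting that $N_G(\langle u\rangle)/C_G(u)$ embeds in the $3'$-group $\mathrm{Aut}(\langle u\rangle)$ for every $3$-element $u$, force $M$ and $M'$ to be conjugate via an element of $N_G(D)$, and invoke maximality of $|D|$ to conclude $M = M'$.

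The main obstacle is exactly this TI step. The even-order hypothesis should enter through an involution $t \in G$: since $3 \nmid |C_G(t)|$ (otherwise a $3$-element commuting with $t$ would give an element of order $6$, again contradicting $\deg(3) = 0$), $t$ acts without nontrivial fixed points on every Sylow $3$-subgroup it normalizes, and this rigidity is what closes the TI argument. Once TI is in hand, $C_G(x) \leq M$ for all $1 \neq x \in M$ is immediate, and $G$ is a $3CC$-group.
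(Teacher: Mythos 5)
Your step one --- converting $\deg(3)=0$ into the statement that $C_G(x)$ is a $3$-group for every nontrivial $3$-element $x$ --- is correct, and it is exactly the ``by the definition'' half of the paper's two-line proof. The reduction of the $3CC$ condition to a trivial-intersection (TI) property of Sylow $3$-subgroups is also sound. The gap is that the TI step, which you yourself flag as ``the main obstacle,'' is never actually closed, and it is precisely there that all the content of this lemma lives: the paper disposes of the implication ``$C\theta\theta$-group of even order $\Rightarrow$ $3CC$-group'' by citing Arad's theorem from \cite{arad-even}, and that implication is a substantive classification-type result, not a Sylow-theoretic exercise. Concretely: (i) your auxiliary claim that $N_G(\langle u\rangle)/C_G(u)$ embeds in a $3'$-group fails as soon as $u$ has order $3^k$ with $k\geq 2$, since $|\mathrm{Aut}(Z_{3^k})|=2\cdot 3^{k-1}$; (ii) the maximal-intersection setup only yields that $N_M(D)$ and $N_{M'}(D)$ are $3$-subgroups of $N_G(D)$ properly containing $D$ --- to force $M$ and $M'$ to be conjugate in $N_G(D)$ you need control of the $3$-local subgroup $N_G(D)$, which the $C\theta\theta$ hypothesis does not hand you; and (iii) the observation about an involution $t$ with $3\nmid |C_G(t)|$ acting fixed-point-freely on any Sylow $3$-subgroup it normalizes is true (and shows such a Sylow subgroup is abelian), but the sentence ``this rigidity is what closes the TI argument'' is a promissory note for exactly the theorem to be proved. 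In the literature this step rests on deep prior work (Higman, Feit--Thompson, Glauberman--Goldschmidt-type theorems), not on a Burnside-style conjugacy trick.

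So the proposal is not a proof as it stands: either cite \cite{arad-even} as the paper does, or be prepared to reproduce a genuinely hard argument for the TI/strong-embedding step.
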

\begin{proof}
Since ${\rm deg}(3)=0$, so by the definition $G$ is a $C\theta\theta$-group. Also since $G$ has an even order, by \cite{arad-even}, it follows that $G$ is $3CC$-group.
\end{proof}
%%%%%%%%%%%%%%%%%%%%%%%%%%%%%%%%%%%%%%%%%%%%%%%%%%%%%%%%%%%%%%%%%%%%%%%%%%%%%%%%%%%%%%%%%%%%
\begin{theorem}\label{main1}
Let $G$ be a finite group with even order. If $3$ is an isolated vertex in the prime graph of $G$ and $D(G)$ is the degree pattern of $G$, then:

a) If $G$ is solvable, then one of the following cases holds: 

a-1) $G=K:C$ is a Frobenius group with kernel $K$ and complement $C$. In this case one of the subgroups $K$ or $C$ is a $3$-subgroup of $G$. 

a-2) $G$ is a $2$-Frobenius group with normal series $1\lhd H\lhd K\lhd G$ such that $N$ is a $3'$-subgroup, $K$ is a cyclic $3$-subgroup and $C\cong Z_2$ is cyclic.

Moreover, in these two cases, if we put $n=|\pi(G)|$, then we have
\begin{equation*}
D(G)=(n-2,0,n-2,\dots,n-2)
\end{equation*}

b) if $G$ is non-solvable, then one of the following cases holds:
 
b-1) has a normal elementary abelian $2$-subgroup $N$ such that $G/N$ is isomorphic to either $PSL(2,4)$ or $PSL(2,8)$ and we have
\begin{equation*}
D(PSL(2,4))=D(PSL(2,8))=(0,0,0)
\end{equation*}

b-2) $G$ is isomorphic the simple group $PSL(3,4)$ and we have
\begin{equation*}
D(PSL(3,4))=(0,0,0,0),
\end{equation*}

b-3) $G$ is isomorphic the simple group $PSL(2,p)$ where $p$ is a prime number and either $(p+1)/2$ or $(p-1)/2$ is a power of $3$. In this case, if we put $n=|\pi(PSL(2,p))|$, then we have
\begin{equation*}
D(PSL(2,p))=(n-3,0,n-3,\cdots,n-3,0),
\end{equation*}

b-4) $G$ is isomorphic the simple group $PSL(2,3^n)$ where $n>1$. In this case, if we put $n_1=|\pi((3^n-1)/2)|$ and $n_2=|\pi((3^n+1)/2)|$, then we have
\begin{equation*}
D(PSL(2,3^n))=(n_1-2 ~or~ n_2-2,0,n_1-2 ~or~ n_2-2,\dots,n_1-2 ~or~ n_2-2),
\end{equation*}

b-5) $G$ is isomorphic to the almost simple group $PGL(2,3^n)$, where $n>1$. In this case, if we put $n_1=|\pi(3^n-1)|$ and $n_2=|\pi(3^n+1)|$, then we have
\begin{equation*}
D(PGL(2,3^n))=(n_1+n_2-2,0,n_1-1~ or ~n_2-1,\dots,n_1-1 ~or~ n_2-1).
\end{equation*}
\end{theorem}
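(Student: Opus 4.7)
The plan is to combine Lemma~\ref{3cc} with Arad's theorem (Lemma~\ref{lem-main}): since $|G|$ is even and $\deg(3)=0$, $G$ is a $3CC$-group, so it falls into one of the six Arad cases. I would then dispatch these in parallel with the statement of the theorem: cases (1)--(2) yield the solvable branches (a-1)--(a-2), and cases (3)--(6) yield the non-solvable branches (b-1)--(b-5).

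For the solvable sub-cases, Arad's case (1) gives $M\unlhd G$ with $M$ a Sylow $3$-subgroup; the $C\theta\theta$-hypothesis forces $C_G(x)=M$ for every nonidentity $x\in M$, making $G$ Frobenius with $3$-kernel $M$, which is part of (a-1) (the symmetric possibility, with $3$ lying in the complement instead, is handled by the same argument applied to the $3$-part of the complement after using $\deg(3)=0$). In Arad's case (2), the normal nilpotent $N$ must be a $3'$-subgroup (else $M\cap N\neq 1$ would produce a $3$-element centralising a nontrivial part of $N$, contradicting $\deg(3)=0$); the preimage $K$ of $M$ in $G$ then has $K/N\cong M$ cyclic, and $G/N\cong N_G(M)$ is Frobenius with kernel $K/N$, so $G$ is $2$-Frobenius. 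Lemma~\ref{2fro} together with $|\textrm{Aut}(K/N)|$ having $3'$-part equal to $2$ then forces $G/K\cong Z_2$, giving (a-2). In both cases the degree pattern follows at once from Lemmas~\ref{fro-me} and \ref{2fro-graph}: the non-isolated component is a clique on $n-1$ vertices, so $D(G)=(n-2,0,n-2,\dots,n-2)$.

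For the non-solvable branches, each of Arad's cases (3)--(6) is handled separately. In case (3), the three components of $\ga(\textrm{PSL}(2,2^\alpha))$ are $\{2\}$, $\pi(2^\alpha-1)$, $\pi(2^\alpha+1)$, and isolating $3$ forces one of $2^\alpha\pm 1$ to be a pure power of $3$; Lemma~\ref{dio1} then leaves only $\alpha\in\{2,3\}$, yielding $\textrm{PSL}(2,4)$ and $\textrm{PSL}(2,8)$, as in (b-1). In case (4), $G\cong\textrm{PSL}(2,p^n)$ with $p^n>3$: if $p=3$ we land in (b-4) with no further restriction; if $p\ge 5$, one of $(p^n\pm 1)/2$ must be a power of $3$, and Lemma~\ref{dio2} forces $n=1$, giving (b-3). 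Case (5) is (b-2) outright, and in case (6), $G$ has $\textrm{PSL}(2,3^n)$ as a simple normal subgroup of index $2$ and $\deg(3)=0$, so Lemma~\ref{out-pgl} identifies $G$ with $\textrm{PGL}(2,3^n)$, yielding (b-5). The specific degree patterns then come from Lemma~\ref{mu-pgl}: for $\textrm{PSL}(2,q)$ the three prime-graph components are $\{p\}$, $\pi((q-1)/2)$, $\pi((q+1)/2)$, each complete, while in $\textrm{PGL}(2,3^n)$ the prime $2$ lies in both $\pi(3^n-1)$ and $\pi(3^n+1)$, fusing them into a single component whose unique universal vertex is $2$.

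The main obstacle, I expect, is Arad's case (2): showing cleanly that the $C\theta\theta$-hypothesis really produces a $2$-Frobenius decomposition with the precise parameters of (a-2), and separating the roles of $N$, $K/N$, and $G/K$ without ambiguity. By comparison, the Diophantine reductions in cases (3)--(4) are a disciplined application of Lemmas~\ref{dio1}--\ref{dio2} once the component structure of $\ga(\textrm{PSL}(2,q))$ is invoked, and the final degree-pattern bookkeeping is routine given Lemmas~\ref{mu-pgl}, \ref{fro-me} and \ref{2fro-graph}.
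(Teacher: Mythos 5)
Your proposal follows essentially the same route as the paper: Lemma~\ref{3cc} plus Arad's classification (Lemma~\ref{lem-main}), with the Diophantine Lemmas~\ref{dio1}--\ref{dio2}, Lemma~\ref{out-pgl}, and the Frobenius/2-Frobenius structure lemmas dispatching the six cases exactly as the paper does, and the degree patterns read off from Lemma~\ref{mu-pgl}. The only difference is organizational: the paper obtains the ``$3$-group as complement'' half of (a-1) from Arad's case (2) in the degenerate subcase $N_G(M)=M$ (where $G=N:M$ is plain Frobenius), rather than as a symmetric variant of case (1), so your claim that $G/N\cong N_G(M)$ is always Frobenius in case (2) should be qualified accordingly.
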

\begin{proof}
First we note that since we assume that $3$ is an isolated vertex in the prime graph of $G$, so $G$ is a $C\theta\theta$-group. Also since $G$ has an even order, then by Lemma~\ref{lem-main}, one of the following assertions holds:

{\bf Case 1:} If $M$ is a Sylow $3$-subgroup of $G$, then $M$ is normal. Hence by Schur-Zassenhaus's Theorem, $M$ has a complement $H$ and so $G=M:H$ in which $H$ is a Hall $3'$-subgroup of $G$. Since $3$ is an isolated vertex in the prime graph of $G$, we get that $H$ acts fixed point freely on $M$. This implies that $G$ is a Frobenius group with kernel $M$ and complement $H$. So by Lemma~\ref{fro-me}, we get (a-1).

{\bf Case 2:} $G$ has a normal nilpotent subgroup $N$ such that $G/N\cong N_G(M)$, where $M$ is a Sylow $3$-subgroup of $G$ and $M$ is cyclic.

{\bf Subcase 2-1:} If $M=N_G(M)$, then $G=N:M$ is a Frobenius group with kernel $N$. Also in this case, since $N$ is nilpotent and $M$ is a $3$-subgroup, then $G$ is solvable and again we get (a-1).

{\bf Subcase 2-2:} If $M$ is a pure subgroup of $N_G(M)$, then $N_G(M)=M:C$ is a Frobenius group with kernel $M$. Since $M$ is cyclic, then $C\cong Z_2$. In this case, $G$ is a $2$-Frobenius group with normal series $1\unlhd N\unlhd N:M\unlhd G$ which implies (a-2).   

{\bf Case 3:} $G$ has a normal elementary abelian 2-subgroup $N$ such that $G/N\cong PSL(2,2^{\alpha})$, where $\alpha\geq2$.
By Lemma~\ref{mu-pgl}, we have $\mu(PSL(2,2^{\alpha}))=\{2,2^{\alpha}-1,2^{\alpha}-1\}$. So either $2^{\alpha}-1$ or $2^{\alpha}+1$ is a power of $3$. Thus by Lemma~\ref{dio1}, the only possible cases are either $\alpha=2$ or $\alpha=3$. This asserts that $G/N$ is isomorphic to either $PSL(2,4)$ or $PSL(2,8)$, which gets (b-1).

{\bf Case 4:} $G\cong PSL(2,q)$ where $q$ is a power of odd prime number $p>3$. In this case, by Lemma~\ref{mu-pgl}, $\mu(PSL(2,q))=\{p,q-1,q+1\}$. Since $3$ is an isolated vertex in the prime graph of ${\rm PSL}(2,q)$, we get that either $(q-1)/2$ or $(q+1)/2$ is a power of $3$. Now by Lemma~\ref{dio2}, it follows that $q=p$. So either $p-1$ or $p+1$ is a power of $3$, Case (b-3).

{\bf Case 5:} $G$ is isomorphic to $PSL(2,3^n):\langle\theta\rangle$, where $\theta$ is an involution. This means that $G$ is an almost simple group related to ${\rm PSL}(2,3^n)$ such that ${\rm deg}(3)=0$ in the prime graph of $G$. So by Lemma~\ref{out-pgl}, we conclude that $G\cong PGL(2,3^n)$, Case (b-5).

{\bf Case 6:} $G$ is isomorphic to $PSL(2,3^n)$ where $n>1$ or $PSL(3,4)$. By Lemma~\ref{mu-pgl} and \cite{atlas}, we get that the vertex $3$ is an isolated vertex in the prime graph of these simple groups, Cases (b-2) and (b-4).

By \cite{atlas} and Lemma~\ref{mu-pgl}, we can easily compute the degree pattern of the above groups, which completes the proof.
\end{proof}
%%%%%%%%%%%%%%%%%%%%%%%%%%%%%%%%%%%%%%%%%%%%%%%%%%%%%%%%%%%%%%%%%%%%%%%%%%%%%%%%%%%%%%%%%%%%
We remark that the above theorem does not show that the finite groups explained in that theorem exist. it is obvious that when $p$ is a prime number such that $p\in\{5,7,17,19,53,163\}$, the simple group $PSL(2,p)$ satisfies the condition of Case (b-3) of Theorem~\ref{main1}. However, it is not clear that the number of these finite simple groups is finite or infinite. In the following, we show that there is infinitely many solvable groups with even order such that ${\rm deg}(3)=0$ in their prime graph.
%%%%%%%%%%%%%%%%%%%%%%%%%%%%%%%%%%%%%%%%%%%%%%%%%%%%%%%%%%%%%%%%%%%%%%%%%%%%%%%%%%%%%%%%%%%%
\begin{lemma}\label{infi-fro1}
There exist infinitely many Frobenius groups of even order whose Fitting subgroups are $3$-subgroups.
\end{lemma}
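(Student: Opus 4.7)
The plan is to exhibit an explicit infinite family. For each integer $n\geq 1$, I would take $G_n:=D_{2\cdot 3^n}$, the dihedral group of order $2\cdot 3^n$, with presentation $\langle r,s\mid r^{3^n}=s^2=1,\ srs^{-1}=r^{-1}\rangle$. Since $3^n$ is odd, one has $r^i\neq r^{-i}$ for every $1\leq i\leq 3^n-1$, so the involution $s$ acts by inversion fixed-point-freely on $\langle r\rangle\cong Z_{3^n}$. This realises $G_n$ as a Frobenius group of even order with kernel $\langle r\rangle$ and complement $\langle s\rangle\cong Z_2$.

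The next step is to identify $F(G_n)=\langle r\rangle$. By Lemma~\ref{fro}(a) the Frobenius kernel is nilpotent, so $\langle r\rangle\leq F(G_n)$. Conversely, every normal subgroup of $G_n$ either lies in $\langle r\rangle$ or contains it; in the latter case, since $G_n/\langle r\rangle\cong Z_2$, the only options are $\langle r\rangle$ itself or all of $G_n$. As $G_n$ is non-nilpotent, the Fitting subgroup must coincide with $\langle r\rangle$, which is a $3$-subgroup.

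Finally, the orders $|G_n|=2\cdot 3^n$ are pairwise distinct, so $\{G_n\}_{n\geq 1}$ consists of infinitely many pairwise non-isomorphic groups, yielding the desired family. I do not foresee any substantive obstacle: the whole argument reduces to the elementary observation that inversion is a fixed-point-free automorphism of any finite cyclic group of odd order, together with the transparent structure of the normal subgroup lattice of a dihedral group. If one wished for a more elaborate family (say, with non-cyclic kernel), one could instead take $K=(Z_3)^m$ equipped with a fixed-point-free involution action for suitable $m$, but the dihedral family is the cleanest and already suffices.
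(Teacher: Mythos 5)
Your proof is correct and follows essentially the same approach as the paper: both exhibit an explicit infinite family of Frobenius groups of order $2\cdot 3^n$ in which an involution acts fixed-point-freely by inversion on an abelian $3$-group. The only cosmetic difference is that the paper takes the kernel to be the additive group of $GF(3^n)$ (elementary abelian, with $\gamma=-1$ acting by negation), whereas you take the cyclic kernel $Z_{3^n}$ inside the dihedral group $D_{2\cdot 3^n}$ --- the variant you mention in your closing remark is precisely the paper's construction.
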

\begin{proof}
Let $F$ be an arbitrary field of characteristic $3$. We know that the multiplicative group $F^\#$ has a unique involution $\gamma$. Also we know that $F^\#$ acts fixed point freely on the additive group $F$. So the semidirect product of $F$ by the subgroup generated by $\gamma$ is a Frobenius group of even order whose Fitting subgroup is a $3$-subgroup.
\end{proof}
%%%%%%%%%%%%%%%%%%%%%%%%%%%%%%%%%%%%%%%%%%%%%%%%%%%%%%%%%%%%%%%%%%%%%%%%%%%%%%%%%%%%%%%%%%%%
similarly to the previous lemma we get the following result:
\begin{lemma}\label{infi-fro2}
There exist infinitely many Frobenius groups of even order whose complement is a cyclic $3$-subgroup.
\end{lemma}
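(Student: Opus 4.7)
The plan is to mirror the construction in Lemma~\ref{infi-fro1}, swapping the roles of the two primes: instead of a field of characteristic $3$ with a complement of order $2$, I would use a field of characteristic $2$ with a cyclic complement of order $3$. Concretely, for each positive integer $m$, let $F = \mathrm{GF}(2^{2m})$. Since $2^2 \equiv 1 \pmod 3$, the multiplicative group $F^\#$ has order $2^{2m}-1$ divisible by $3$, and so contains an element $\gamma$ of order $3$.

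I would then form the semidirect product $F : \langle \gamma \rangle$, where $\langle \gamma \rangle$ acts on the additive group of $F$ by multiplication in $F$. This action is fixed-point-free on $F \setminus \{0\}$ by the field axioms (if $\gamma x = x$ with $x \neq 0$, then $\gamma = 1$), so $F : \langle \gamma \rangle$ is a Frobenius group with kernel $F$ and cyclic $3$-subgroup complement $\langle \gamma \rangle$. Its order $3 \cdot 2^{2m}$ is even, as required.

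Letting $m$ range over the positive integers yields infinitely many pairwise non-isomorphic Frobenius groups of this form, since the orders $3 \cdot 2^{2m}$ are all distinct. No substantive obstacle arises: the construction is a direct dual of the one in Lemma~\ref{infi-fro1}, the only arithmetic ingredient being the trivial observation that $3 \mid 2^{2m}-1$ for every $m \geq 1$. If one wanted complements of arbitrarily large $3$-power order rather than merely order $3$, one could instead choose $\gamma$ of order $3^k$ inside $\mathrm{GF}(2^{2\cdot 3^{k-1}})^\#$, using the fact that $2$ has multiplicative order $2\cdot 3^{k-1}$ modulo $3^k$.
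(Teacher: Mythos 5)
Your construction is correct and is precisely the ``similar'' argument the paper alludes to for this lemma (it gives no explicit proof beyond referring to Lemma~\ref{infi-fro1}): you dualize the characteristic-$3$ construction by taking the additive group of $\mathrm{GF}(2^{2m})$ as kernel and an order-$3$ element of the multiplicative group as complement, and the fixed-point-freeness and the count of non-isomorphic examples are both verified correctly. No gaps.
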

%%%%%%%%%%%%%%%%%%%%%%%%%%%%%%%%%%%%%%%%%%%%%%%%%%%%%%%%%%%%%%%%%%%%%%%%%%%%%%%%%%%%%%%%%%%%
\begin{lemma}\label{infi-2fro}
There exist infinitely many 2-Frobenius groups $G$ with normal series $1\unlhd N\unlhd H\unlhd G$ such that $H/N$ is a $3$-group and ${\rm deg}(3)=0$ in $\ga(G)$.
\end{lemma}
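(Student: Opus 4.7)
The plan is to produce an explicit infinite family indexed by primes $q\equiv 2\pmod 3$; Dirichlet's theorem on primes in arithmetic progressions supplies infinitely many such $q$. For each such $q$, I would let $N:=\mathbb{F}_{q^2}^{+}$ be the additive group of the field of order $q^2$, let $\omega\in\mathbb{F}_{q^2}^{\times}$ be a primitive cube root of unity (which exists because $3\mid q^2-1$), and let $\sigma\colon x\mapsto x^q$ be the Frobenius. The congruence $q\equiv 2\pmod 3$ forces $\omega\notin\mathbb{F}_q$ and hence $\sigma(\omega)=\omega^q=\omega^{-1}$, so $T:=\langle\omega,\sigma\rangle\le\mathrm{Aut}(N)$ is isomorphic to $S_3$ and acts faithfully on $N$. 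I would set $G_q:=N\rtimes T$ and $H:=N\rtimes\langle\omega\rangle$.

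I would then verify that the series $1\unlhd N\unlhd H\unlhd G_q$ has the $2$-Frobenius structure of Lemma~\ref{2fro} with $H/N$ a $3$-group. The subgroup $H$ is Frobenius with kernel $N$ and complement $\langle\omega\rangle\cong C_3$, because multiplication by $\omega\ne 1$ is fixed-point-free on $N$; since $\langle\omega\rangle\lhd T$ one also has $H\lhd G_q$; and $G_q/N\cong S_3$ is Frobenius with kernel $H/N\cong C_3$ and complement $G_q/H\cong C_2$. To check that $3$ is isolated in $\ga(G_q)$, I would note that $\langle\omega\rangle$ is a Sylow $3$-subgroup and compute $C_{G_q}(\omega)$ directly inside the semidirect product: a centralizing element $(n,t)\in N\rtimes T$ must satisfy $t\omega t^{-1}=\omega$, forcing $t\in C_T(\omega)=\langle\omega\rangle$, and then the first coordinate condition reduces to $(1-\omega)n=0$, forcing $n=0$ since $\omega\ne 1$. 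Hence $|C_{G_q}(\omega)|=3$, so ${\rm deg}(3)=0$. Finally, $|G_q|=6q^2$ takes infinitely many distinct values as $q$ ranges over the primes $\equiv 2\pmod 3$.

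The main point to get right, and essentially the only obstacle, is why one cannot simplify this to $G=\mathbb{F}_p\rtimes S_3$ with $p\equiv 1\pmod 3$: the automorphism group $\mathrm{Aut}(\mathbb{F}_p)\cong\mathbb{F}_p^{\times}$ is cyclic, so any homomorphism $S_3\to\mathrm{Aut}(\mathbb{F}_p)$ factors through $S_3^{\mathrm{ab}}\cong C_2$ and kills the $C_3$, which prevents the required $3$-group Frobenius kernel $H/N$ from acting nontrivially on $N$. This is precisely why one must pass to the two-dimensional additive group $\mathbb{F}_{q^2}^{+}$, on which $S_3$ acts faithfully via field multiplication together with the Frobenius.
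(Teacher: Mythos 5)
Your construction is correct, but it reaches the lemma by a genuinely different route than the paper. The paper keeps the acting group fixed --- it builds an $S_3$-shaped group $F^{\#}{:}C$ out of the field $\mathrm{GF}(4)$ and lets it act on $V=\mathrm{GF}(4)^{\,n}$, obtaining the infinite family by letting the dimension $n$ grow; thus $N$ is an elementary abelian $2$-group of unbounded rank and every group in the family has $\pi(G)=\{2,3\}$, hence literally the same prime graph (which fits the paper's surrounding theme of infinitely many $C\theta\theta$-groups with one degree pattern). You instead fix the module rank and vary the characteristic: for each prime $q\equiv 2\pmod 3$ you take $N=\mathbb{F}_{q^2}^{+}$ and let $S_3=\langle\omega,\sigma\rangle$ act via multiplication by a cube root of unity and the field Frobenius, invoking Dirichlet for infinitude; your groups have $\pi(G_q)=\{2,3,q\}$, which is equally acceptable since the lemma only asks for infinitely many groups. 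Your version has two concrete advantages: the centralizer computation $C_{G_q}(\omega)=\langle\omega\rangle$ is carried out explicitly rather than asserted, and the action of the top $C_2$ on $N$ is unambiguous (it is the Frobenius $\sigma$, which correctly conjugates multiplication by $\omega$ to multiplication by $\omega^{-1}$). The paper's proof is hazier exactly at this point: the displayed action $(\xi_1+\cdots+\xi_n)^{yx}:=(\xi_1y)+\cdots+(\xi_ny)$ depends only on $y$ and so is not a homomorphism on $G=F^{\#}{:}C$ unless one additionally lets $C$ act on $V$ by the squaring automorphism --- precisely the repair your Frobenius map $\sigma$ builds in from the start. Your closing remark on why $\mathbb{F}_p\rtimes S_3$ cannot work (any map $S_3\to\mathrm{Aut}(C_p)$ kills the $C_3$) correctly identifies why a rank-two module is forced. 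The only mild quibble is that Dirichlet's theorem is heavier machinery than needed --- one could equally let $q$ range over powers of a single prime, as the paper in effect does --- but this does not affect correctness.
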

\begin{proof}
Let $F$ be the Galois Field $GF(2^2)=\{0,1,\alpha,\alpha+1\}$, where $\alpha^2=1$. Similarly to the previous lemma, we can construct a Frobenius group with Fitting subgroup $F$ and cyclic component $F^\#=\{1,\alpha,\alpha+1\}$. If we put $C:=\{0,1\}$, then $C$ is a subgroup of Galois Filed $F$. Now we define an action of $C$ on $F^\#$ as follows if $x\in C$ and $y\in F^\#\setminus\{1\}$, then $y^x:=x+y$. This action shows that the semidirect product group $G:=F^\#:C$ is a Frobenius group with Fitting subgroup $F^\#$.

Now let $V=F\bigoplus F\bigoplus\cdots \bigoplus F$ be a vector space with dimension $n$ over the field $F$. So $G$ acts on the additive group $V$ as follows if $\xi_1+\xi_2+\cdots+\xi_n\in V$ and $yx\in G$, where $y\in F^\#$ and $x\in C$, then $(\xi_1+\xi_2+\cdots+\xi_n)^{yx}:=(\xi_1y)+(\xi_2y+\cdots+(\xi_ny)$. This implies that $V:F^\#$ is a Frobenius group with fitting subgroup $V$. So the finite group $V:G$ is a $2$-Frobenius group in which $3$ is an isolated vertex in its prime graph.
\end{proof}
%%%%%%%%%%%%%%%%%%%%%%%%%%%%%%%%%%%%%%%%%%%%%%%%%%%%%%%%%%%%%%%%%%%%%%%%%%%%%%%%%%%%%%%%%%%%
\begin{corollary}\label{D(G)2}
Let $G$ be finite solvable group with even order such that ${\rm deg}(3)=0$ in the prime graph of $G$. If $H$ is a finite group such that $D(H)= D(G)$, then $H$ is solvable.
\end{corollary}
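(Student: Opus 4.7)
My plan is to combine the explicit form of $D(G)$ supplied by Theorem~\ref{main1}(a) with the $3CC$-classification of Lemma~\ref{lem-main} and Theorem~\ref{main1}, now applied to $H$. Since $G$ is solvable with even order and $\deg(3)=0$, Theorem~\ref{main1}(a) gives $D(G) = (n-2, 0, n-2, \ldots, n-2)$ with $n = |\pi(G)|$. The hypothesis $D(H) = D(G)$ therefore forces $\Gamma(H)$ to consist of a single isolated vertex, placed at the second-smallest prime $q$ of $\pi(H)$, together with a complete graph on the remaining $n-1$ vertices.

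I would first dispose of two easy sub-cases. If $|H|$ is odd, the Feit--Thompson theorem delivers solvability immediately. If $|H|$ is even but $3 \notin \pi(H)$ (equivalently, the isolated vertex $q$ of $\Gamma(H)$ satisfies $q > 3$), then since every finite non-abelian simple group has order divisible by $3$, $H$ has no non-abelian simple composition factor, so it is again solvable.

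The substantive case is $|H|$ even with $3 \in \pi(H)$, which forces $q = 3$ and $\deg(3) = 0$ in $\Gamma(H)$. By Lemma~\ref{3cc}, $H$ is a $3CC$-group of even order, so Theorem~\ref{main1} assigns $H$ to one of (a-1), (a-2) (both solvable, as desired) or (b-1)--(b-5); my task is to rule out the last five by comparing their tabulated degree patterns with the target $(n-2, 0, n-2, \ldots, n-2)$. The patterns in (b-1) and (b-2) are all zeros on $3$ or $4$ primes, incompatible with the nontrivial entry $n-2$. Pattern (b-3) has a second $0$ at the last coordinate, while $D(G)$ has exactly one $0$. In (b-4) the prime graph splits into at least three components, so any non-isolated prime carries degree at most $\max(n_1,n_2)-1 < n-2$. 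In (b-5) the non-$2$ primes of $\pi(3^n \mp 1)$ carry degree $n_i - 1$, which equals $n-2 = n_1 + n_2 - 2$ only if the other part $n_{3-i}$ equals $1$; i.e., only if $3^n \mp 1$ is a power of $2$, and by Lemma~\ref{dio1} this occurs for $n>1$ only in the single instance $3^2 - 1 = 2^3$.

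The main obstacle I anticipate is precisely this boundary: $H \cong \mathrm{PGL}(2,9)$ has $D(H) = (1,0,1)$, which matches the degree pattern of any solvable Frobenius group on three primes (such as $(Z_3)^4 : Z_{10}$). Excluding $\mathrm{PGL}(2,9)$ cannot be accomplished by the degree pattern alone; the proof will either need a side argument using a finer invariant than $D$, or a tacit restriction to $n \geq 4$, where all the tabulated non-solvable candidates fail the comparison outright.
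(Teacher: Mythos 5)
Your case-by-case comparison of degree patterns is exactly the argument the paper intends: its entire proof of this corollary is the single line ``By Theorem~\ref{main1}, it is straightforward,'' so you have simply carried out in full the comparison the author leaves implicit. Your handling of cases (b-1)--(b-4) is correct, as is the reduction of (b-5) to the question of when $3^n\mp 1$ is a power of $2$.

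The boundary case you flag is not merely an obstacle to the proof --- it is a genuine counterexample to the corollary as stated, and the paper's one-line proof silently passes over it. Concretely, take $G=(\mathbb{Z}_3)^4:\mathbb{Z}_{10}$, where $\mathbb{Z}_{10}$ is the subgroup of order $10$ of the cyclic group $GF(81)^{\#}$ acting by multiplication on the additive group of $GF(81)$. This is a solvable Frobenius group of even order with $\pi(G)=\{2,3,5\}$, $\deg(3)=0$ and $D(G)=(1,0,1)$. On the other hand $\mu({\rm PGL}(2,9))=\{3,8,10\}$ by Lemma~\ref{mu-pgl}, so $D({\rm PGL}(2,9))=(1,0,1)=D(G)$ while ${\rm PGL}(2,9)$ is non-solvable. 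This is consistent with the fact that the paper's subsequent corollaries restrict to ${\rm PGL}(2,3^k)$ with $k>2$ and the abstract requires $q>9$; the same exclusion is needed here but is missing. So your instinct is right: the corollary holds as soon as $|\pi(G)|\neq 3$ (your verification that all non-solvable candidates fail for $n\geq 4$, and trivially for $n=2$, is correct), but for $|\pi(G)|=3$ no argument from the degree pattern alone can succeed, and the statement must either exclude this case or invoke a finer invariant. You should not try to ``fix'' the proof without changing the hypotheses.
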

\begin{proof}
By Theorem~\ref{main1}, it is straightforward.
\end{proof}
%%%%%%%%%%%%%%%%%%%%%%%%%%%%%%%%%%%%%%%%%%%%%%%%%%%%%%%%%%%%%%%%%%%%%%%%%%%%%%%%%%%%%%%%%%%%
The next lemma shows that there are infinitely manu non-solvable groups such that ${\rm deg}(3)=0$ in their prime graph.
%%%%%%%%%%%%%%%%%%%%%%%%%%%%%%%%%%%%%%%%%%%%%%%%%%%%%%%%%%%%%%%%%%%%%%%%%%%%%%%%%%%%%%%%%%%%
\begin{lemma}\label{infi-psl}
Let $L$ be a finite group isomorphic to one of the simple groups $PSL(2,4)$ or $PSL(2,8)$. Then there exist infinitely many groups $G$ such that ${\rm deg}(3)=0$ in $\ga(G)$ and $G/O_2(G)$ is isomorphic to $L$.
\end{lemma}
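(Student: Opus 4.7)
The plan is to construct, for each positive integer $n$, a semidirect product $G_n=V_n:L$ in which $V_n$ is a nontrivial elementary abelian $2$-group viewed as an $\mathbb{F}_{2}[L]$-module, and to arrange the action so that every element of order $3$ in $L$ acts fixed-point-freely on $V_n$. Varying $n$ will produce infinitely many non-isomorphic groups satisfying the two required properties.

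The main technical observation is the following: if $g\in L$ has order $3$ and acts without nonzero fixed points on an $\mathbb{F}_{2}[L]$-module $V$, then $g-1$ is invertible on $V$, and from $(g-1)(1+g+g^{2})=g^{3}-1=0$ one obtains $1+g+g^{2}=0$ on $V$. Computing inside $V:L$ then gives
\begin{equation*}
(v,g)^{3}=\bigl((1+g+g^{2})v,\,1\bigr)=(0,1)
\end{equation*}
for every $v\in V$, so each element of $V:L$ projecting to an order-$3$ element of $L$ already has order $3$. In particular $V:L$ has no element of order $6$. Combined with Lemma~\ref{mu-pgl}, which yields $\mu({\rm PSL}(2,4))\subseteq\{2,3,5\}$ and $\mu({\rm PSL}(2,8))\subseteq\{2,7,9\}$, this rules out elements of order $3p$ for every $p\in\pi(G_n)\setminus\{3\}$, so ${\rm deg}(3)=0$ in $\ga(G_n)$.

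To realize such a module I would take $V_{0}:=\mathbb{F}_{q}^{2}$ with $q\in\{4,8\}$, equipped with its natural $L$-action, and view it as an $\mathbb{F}_{2}[L]$-module by restriction of scalars. An element of order $3$ in $L={\rm PSL}(2,q)$ lies in a split torus when $q=4$ (since $3\mid q-1$) and in a non-split torus when $q=8$ (since $3\mid q+1$); in both situations its eigenvalues on $V_{0}\otimes_{\mathbb{F}_{q}}\overline{\mathbb{F}_{q}}$ are nontrivial cube roots of unity, so it has no nonzero fixed vector on $V_{0}$. Setting $V_{n}:=V_{0}\oplus\cdots\oplus V_{0}$ ($n$ copies) preserves fixed-point-freeness, and $G_{n}:=V_{n}:L$ then satisfies $O_{2}(G_{n})=V_{n}$ (the normal $2$-subgroup $V_{n}$ cannot be properly enlarged inside $G_n$, because by simplicity of $L$ the only larger normal subgroup would be $G_{n}$ itself, which is not a $2$-group). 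Hence $G_{n}/O_{2}(G_{n})\cong L$, and the orders $|G_{n}|=|V_{0}|^{n}|L|$ are pairwise distinct, yielding infinitely many examples.

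The main obstacle is the fixed-point-freeness check for order-$3$ elements, which has to be carried out separately for the split case ${\rm PSL}(2,4)$ and the non-split case ${\rm PSL}(2,8)$ (for the latter one must pass to $\mathbb{F}_{64}$ in order to see the eigenvalues). Once that verification is in hand, the rest reduces to routine bookkeeping about the semidirect-product structure and counting.
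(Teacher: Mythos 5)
Your construction is essentially the paper's: take the natural $2$-dimensional module $V_0$ for $L\cong{\rm SL}(2,q)$ over ${\rm GF}(q)$, form $G_n=(V_0\oplus\cdots\oplus V_0):L$, and vary $n$. In fact your write-up is more careful than the paper's, which asserts that $L$ acts fixed-point-freely on $V_0$ (false for involutions, since the Sylow $2$-subgroups of ${\rm SL}(2,4)$ and ${\rm SL}(2,8)$ are not cyclic or quaternion), whereas you correctly isolate the only fact needed --- that the $3$-elements act without nonzero fixed vectors --- and verify it via the eigenvalue argument together with the identity $1+g+g^2=0$.
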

\begin{proof}
Let $L\cong PSL(2,4)\cong SL(2,4)$. there is a modular representation of $L$ over Galois Field $GF(2^2)$ as follows:
\begin{equation*}
G:=\big\{
\left(\begin{array}{ccc}
a & b & u\\
c & d & v \\
0 & o & 1
\end{array}\right)~~|~~where~~ad-bc=1~~and~~u,v\in GF(2^2)
\big\}
\end{equation*}
So $L$ acts fixed point freely on the vector space $V:=GF(2^2)\bigoplus GF(2^2)$. Hence the finite group $V:L$ is a group in which $3$ is an isolated vertex in its prime graph. Now we similarly, we can construct a finite group $G:=(V\bigoplus V\bigoplus\cdots \bigoplus V):L$ such that $G/O_2(G)\cong L$ and ${\rm deg}(3)=0$ in $\ga(G)$.
With a similar argument we can make such a finite group when $L\cong PSL(2,8)$
\end{proof}
%%%%%%%%%%%%%%%%%%%%%%%%%%%%%%%%%%%%%%%%%%%%%%%%%%%%%%%%%%%%%%%%%%%%%%%%%%%%%%%%%%%%%%%%%%%%
\begin{corollary}\label{D(G)1}
Let $G$ be finite non-solvable group such that ${\rm deg}(3)=0$ in the prime graph of $G$. If $H$ is a finite group such that $D(H)= D(G)$, then one of the following cases holds:

1) $H$ has a normal $2$-subgroup $N$, such that $H/N$ is isomorphic to either ${\rm PSL}(2,4)$ or ${\rm PSL}(2,8)$. Moreover, in this case we have $D(H)=(0,0,0)$.

2) $H$ is isomorphic to $G$. 
\end{corollary}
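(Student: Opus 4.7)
The plan is to mirror the strategy of Corollary~\ref{D(G)2}: apply Theorem~\ref{main1}(b) to $G$, then force $H$ into the same classification framework, and finally match the explicit degree patterns case by case.

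First, since $G$ is non-solvable with $\deg(3)=0$ in $\ga(G)$ and of even order, Theorem~\ref{main1}(b) places $G$ in one of the five cases (b-1)--(b-5) with an explicit formula for $D(G)$. I would tabulate these formulas, noting that in (b-1) we have $|\pi(G)|=3$ and $D(G)=(0,0,0)$, whereas in (b-2)--(b-5) the entries of $D(G)$ carry genuine arithmetic content through the parameters $n$, $n_1$, $n_2$.

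Next I would show that $H$ is itself a non-solvable $C\theta\theta$-group of even order. A hypothetical solvable $H$ with $\deg(3)=0$ and even order is ruled out by contrasting Theorem~\ref{main1}(a): such an $H$ has degree pattern of the shape $(n-2,0,n-2,\dots,n-2)$, which cannot match any of the shapes (b-1)--(b-5); the Feit--Thompson theorem then forces $|H|$ to be even. The fixed zero at the second entry of $D(H)=D(G)$ supplies an isolated vertex of $\ga(H)$, and a Sylow-theoretic inspection---together with Lemma~\ref{3cc}---identifies that vertex with the prime $3$, so that $H$ becomes a $3CC$-group. Verifying that the isolated prime picked up from $D(H)$ is really $3$ (rather than some other isolated prime living in $\pi(H)$) is the step I expect to be the main technical obstacle, and I would handle it by exploiting the known restrictions on prime graphs of finite groups with few prime divisors.

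Finally, applying Theorem~\ref{main1}(b) to $H$ produces five candidate structures for $H$. Matching $D(H)=D(G)$ entry by entry, and invoking Lemmas~\ref{dio1} and~\ref{dio2} to resolve small-parameter coincidences, forces $H$ either into case (b-1)---producing conclusion~(1) with $D(H)=(0,0,0)$---or into the same case as $G$ with matching parameters, producing conclusion~(2) via $H\cong G$. The most delicate point inside the matching is the potential collision of the $(0,0,0)$ pattern between case (b-1) and the degenerate instance of case (b-3) at $|\pi(G)|=3$, which must be isolated and treated separately.
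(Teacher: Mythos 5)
The paper's own ``proof'' is a single sentence (``Using the degree pattern of groups in Theorem~\ref{main1}, it is straightforward''), so your plan is essentially a fleshed-out version of the argument the paper leaves implicit: classify $G$ via Theorem~\ref{main1}(b), force $H$ into the same classification, and match degree patterns. The problem is that the two obstacles you correctly flag are not side issues to be ``handled'' --- they are where the argument (and in fact the statement itself) breaks down. First, from $D(H)=D(G)$ alone you only learn that the second-smallest prime of $\pi(H)$ is isolated in $\ga(H)$; nothing forces that prime to be $3$, nor forces $3\mid |H|$ or $2\mid |H|$ at all. The paper itself warns of exactly this with $D(S_3)=D(D_{10})$. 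Your proposed fix (``known restrictions on prime graphs of finite groups with few prime divisors'') would require invoking the Gruenberg--Kegel classification of groups with disconnected prime graph and then checking all simple groups whose graph has three or more components --- a substantial argument that neither you nor the paper supplies.

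Second, and fatally, the $(0,0,0)$ collision you call ``the most delicate point'' is an actual counterexample, not a case to be treated separately. Take $G={\rm PSL}(2,4)\cong A_5$, which is non-solvable with ${\rm deg}(3)=0$ and $D(G)=(0,0,0)$, and $H={\rm PSL}(2,7)$: by Lemma~\ref{mu-pgl}, $\mu({\rm PSL}(2,7))=\{7,3,4\}$, so $\ga(H)$ has no edges and $D(H)=(0,0,0)=D(G)$. But $H$ is simple, hence has no nontrivial normal $2$-subgroup and is not isomorphic to ${\rm PSL}(2,4)$ or ${\rm PSL}(2,8)$, so conclusion (1) fails, and $H\not\cong G$, so conclusion (2) fails. (The same happens with $G={\rm PSL}(2,17)$ and $H={\rm PSL}(2,7)$, both falling under case (b-3) with $|\pi|=3$.) So no entry-by-entry matching can rescue the corollary as stated; it would need to be reformulated, e.g.\ by adding $|H|=|G|$ to the hypotheses or by enlarging the list of exceptional conclusions to include all groups with $D=(0,0,0)$ from cases (b-1) and (b-3).
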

\begin{proof}
Using the degree pattern of groups in Theorem~\ref{main1}, it is straightforward.
\end{proof}
%%%%%%%%%%%%%%%%%%%%%%%%%%%%%%%%%%%%%%%%%%%%%%%%%%%%%%%%%%%%%%%%%%%%%%%%%%%%%%%%%%%%%%%%%%%%

\begin{corollary}
If $G$ is a finite group such that $D(G)=D(\emph{PGL}(2,3^k))$, where $k>2$ is a natural number, then $G\cong\emph{PGL}(2,3^k)$.
\end{corollary}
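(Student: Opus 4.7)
The plan is to reduce directly to Corollary~\ref{D(G)1}, taking the role of the ``non-solvable group $G$'' in that statement to be ${\rm PGL}(2,3^k)$ and the role of ``$H$'' to be the $G$ of the present corollary. First I would verify the hypotheses: ${\rm PGL}(2,3^k)$ is non-solvable for $k>2$ because it contains the non-abelian simple subgroup ${\rm PSL}(2,3^k)$, and ${\rm deg}(3)=0$ in $\ga({\rm PGL}(2,3^k))$, as recorded in Theorem~\ref{main1} Case (b-5) (equivalently, from Lemma~\ref{mu-pgl} we have $\mu({\rm PGL}(2,3^k))=\{3,3^k-1,3^k+1\}$, and $3$ is coprime to the latter two).

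With the hypotheses in place, Corollary~\ref{D(G)1} offers two alternatives for $G$: either (i) $G$ has a normal $2$-subgroup $N$ with $G/N\cong {\rm PSL}(2,4)$ or ${\rm PSL}(2,8)$, in which case $D(G)=(0,0,0)$, or (ii) $G\cong {\rm PGL}(2,3^k)$. The entire problem therefore collapses to ruling out alternative (i), i.e., to showing that the sequence $D({\rm PGL}(2,3^k))$ has length strictly greater than $3$ whenever $k>2$.

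For this, I would use the length formula $|D({\rm PGL}(2,3^k))|=n_1+n_2$ extracted from Theorem~\ref{main1}(b-5), with $n_1=|\pi(3^k-1)|$ and $n_2=|\pi(3^k+1)|$, and show $n_1,n_2\ge 2$ by a short Catalan-type argument via Lemma~\ref{dio1}. If $3^k-1=2^a$ for some $a\ge 1$, then $a=1$ forces $k=1$, while $a>1$ together with $k>1$ forces $(k,a)=(2,3)$; both contradict $k>2$. Similarly, $3^k+1=2^a$ forces $(k,a)=(1,2)$, again incompatible with $k>2$. Hence both $3^k-1$ and $3^k+1$ have odd prime divisors, so $n_1+n_2\ge 4$, alternative (i) is eliminated, and we conclude $G\cong {\rm PGL}(2,3^k)$.

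Once Corollary~\ref{D(G)1} is in hand, no real obstacle remains; the only computation is the elementary Catalan-type check above, since all the heavy lifting is absorbed into the classification of even-order $C\theta\theta$-groups in Theorem~\ref{main1} and its corollary.
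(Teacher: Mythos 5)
Your proof is correct and follows exactly the route the paper intends: the paper states this corollary without proof, leaving it as an immediate consequence of Corollary~\ref{D(G)1} applied with $G={\rm PGL}(2,3^k)$. Your only added content --- verifying via Lemma~\ref{dio1} that $3^k\pm 1$ each have an odd prime divisor for $k>2$, so that $|\pi({\rm PGL}(2,3^k))|=n_1+n_2\ge 4$ and case (1) of that corollary cannot occur --- is precisely the detail the paper omits, and it is carried out correctly.
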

%%%%%%%%%%%%%%%%%%%%%%%%%%%%%%%%%%%%%%%%%%%%%%%%%%%%%%%%%%%%%%%%%%%%%%%%%%%%%%%%%%%%%%%%%%%
\begin{corollary}
If $G$ is a finite group such that $\ga(G)=\ga(\emph{PGL}(2,3^k))$, where $k>2$ is a natural number, then $G\cong\emph{PGL}(2,3^k)$.
\end{corollary}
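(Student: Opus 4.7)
The plan is to reduce this corollary directly to the previous one by observing that equality of prime graphs is a strictly stronger condition than equality of degree patterns; once that reduction is made, the statement follows with no further work.

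Concretely, suppose $\ga(G)=\ga({\rm PGL}(2,3^k))$. Then the two graphs share the same vertex set, so $\pi(G)=\pi({\rm PGL}(2,3^k))$. Listing this common set in increasing order as $p_1<p_2<\cdots<p_n$ gives identical ordered tuples of primes on the two sides. Since the edge sets also coincide, for each $i$ the degree of $p_i$ in $\ga(G)$ equals the degree of $p_i$ in $\ga({\rm PGL}(2,3^k))$, and therefore
\[
D(G)=(\deg_{\ga(G)}(p_1),\ldots,\deg_{\ga(G)}(p_n))=D({\rm PGL}(2,3^k)).
\]

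With $D(G)=D({\rm PGL}(2,3^k))$ in hand, the immediately preceding corollary applies and yields $G\cong{\rm PGL}(2,3^k)$, which is the desired conclusion.

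The only genuinely substantive step is the first, and it amounts to unpacking the definition of $D(\cdot)$; there is no real obstacle here, because all the structural content has already been extracted in Theorem~\ref{main1} and in the degree-pattern corollary just above. One may view this result as the observation that the classification achieved in Theorem~\ref{main1} is sharp enough to distinguish ${\rm PGL}(2,3^k)$ from every other finite group already at the coarser level of the prime graph, without needing the full degree-pattern data.
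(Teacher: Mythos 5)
Your reduction is correct and is exactly what the paper intends: the paper states this corollary without proof, immediately after the degree-pattern version, precisely because $\ga(G)=\ga(\mathrm{PGL}(2,3^k))$ forces $\pi(G)=\pi(\mathrm{PGL}(2,3^k))$ and hence $D(G)=D(\mathrm{PGL}(2,3^k))$. Your explicit unpacking of why graph equality implies degree-pattern equality is a sound (and welcome) filling-in of that implicit step.
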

%%%%%%%%%%%%%%%%%%%%%%%%%%%%%%%%%%%%%%%%%%%%%%%%%%%%%%%%%%%%%%%%%%%%%%%%%%%%%%%%%
%%%%%%%%%%%%%%%%%%%%%%%%%%%%%%%%%%%%%%%%%%%%%%%%%%%%%%%%%%%%%%%%%%%%%%%%%%%%%%%%%%%

\end{document}